\newcommand{\CC}{\mathbb C}
\newcommand{\NN}{\mathbb N}
\newcommand{\RR}{\mathbb R}
\newcommand{\ZZ}{\mathbb Z}
\newcommand\pa{{\partial}}
\newcommand{\maC}{\mathcal C}
\newcommand{\maK}{\mathcal K}
\newcommand{\maL}{\mathcal L}
\newcommand{\maV}{\mathcal V}
\newcommand{\maW}{\mathcal W}
\newcommand{\maZ}{\mathcal Z}
\newtheorem{theorem}{Theorem}[section]
\newtheorem{proposition}[theorem]{Proposition}
\newtheorem{corollary}[theorem]{Corollary}
\newtheorem{lemma}[theorem]{Lemma}
\theoremstyle{definition}
\newtheorem{definition}[theorem]{Definition}
\theoremstyle{remark}
\newtheorem{remark}[theorem]{Remark}
\newtheorem{example}[theorem]{Example}
 \newcommand{\mcb}{\maC_b}
\newcommand\dom{\Omega}
\newcommand\coeff{{\beta}}
\newcommand\Wm[1]{ \breve W^{#1}}
\newcommand\Wp[1]{ W^{#1}_D }
\newcommand\ede{\, := \,}
\newcommand\seq{\, = \,}
\newcommand\Vm{{V_{m}}}
\newcommand\Vmm{{V_m^{-}}}
\author[C. B\u{a}cu\c{t}\u{a}]{Constantin B\u{a}cu\c{t}\u{a}}
\address{C. B\u{a}cu\c{t}\u{a}, University of Delaware,
        Department of Mathematical Sciences,
        501 Ewing Hall,
        Newark, DE 19716-2553, USA.}
\email{bacuta@udel.edu}
\author[H. Li]{Hengguang Li} \address{H. Li, Wayne State University,
  Department of Mathematics, Detroit, MI 48202, USA}
\email{hli@math.wayne.edu}
\author[V. Nistor]{Victor Nistor} \address{V. Nistor,
    Universit\'{e} de Lorraine, UFR MIM, Ile du Saulcy, CS 50128 57045
    METZ Cedex 01, France and V. Nistor, Pennsylvania
    State University, University Park, PA 16802, USA.}
    \email{victor.nistor@univ-lorraine.fr}
\thanks{C. Bacuta has been partially supported by NSF Grant
  DMS-1522454. H. Li has been partially supported by the NSF Grant
  DMS-1418853 and by the Wayne State University Grants Plus Program.
  V. Nistor has been partially supported by ANR-14-CE25-0012-01 (SINGSTAR).\\
Manuscripts available from {\bf http:{\scriptsize
    //}iecl.univ-lorraine.fr{\scriptsize
    /}$\tilde{}$Victor.Nistor{\scriptsize /}}\\ 
    {\em AMS 2000 Subject classification:}\ Primary 35J25; Secondary 35R05, 65N30, 58J32, 
    52B70. 
    \\ 
{\em Key words and phrases:}\ Polyhedral domain, elliptic
equations, mixed boundary conditions, weighted Sobolev spaces,
well-posedness, parametric family, Legendre approximation.}
\date\today
\begin{document}

\dedicatory\today

\title[Operators on domains with conical points]{
Differential operators on domains with conical points: precise uniform 
regularity estimates}

\begin{abstract} 
 We study families of strongly elliptic, second
 order differential operators with singular coefficients on domains
 with conical points. We obtain uniform estimates on their inverses
 and on the regularity of the solutions to the associated Poisson
 problem with mixed boundary conditions. The coefficients and the
 solutions belong to (suitable) weighted Sobolev spaces.  The space of
 coefficients is a Banach space that contains, in particular, the
 space of smooth functions. Hence, our results extend classical
 well-posedness results for strongly elliptic equations in domains
 with conical points to problems with singular coefficients.  We
 furthermore provide precise uniform estimates on the norms of the
 solution operators.
\end{abstract}

\maketitle

\tableofcontents

\section{Introduction}

We consider mixed boundary value problems on a bounded,
$d$-dimensional domain $\dom$ with conical points, $d\geq 1$. The
associated differential operators belong to suitable families of
strongly elliptic, second order differential operators with singular
coefficients. We show that considering suitable singular coefficients
is natural even if one is interested only in the case of regular
coefficients. Using appropiate weighted Sobolev spaces, we obtain
uniform estimates on the norm and on the regularity of the
solutions. In addition, we provide weighted Sobolev space conditions
on the coefficients that ensure a regular dependence of the solution
on the coefficients.

To better explain our results, it is useful to put them into
perspective.  A classical result in Partial Differential Equations
states that a second order, strongly elliptic partial differential
operator $P$ induces an isomorphism
\begin{equation}\label{eq.classical}
  P \, : \, H^{m+1}(G) \cap \{u\vert_{\pa \dom} = 0 \} \,
  \stackrel{\sim}{\longrightarrow} \, H^{m-1}(G)\,,
\end{equation}
for all $m \in \ZZ_+ := \{0, 1, \ldots \}$, provided that $G$ is a
smooth, bounded domain in some euclidean space. See, for example,
\cite{ADN59, LionsMagenes1, TrudingerBook, hormander3} and the
references therein. This result has many applications and extensions.
However, it does not extend directly to non-smooth domains. In fact,
on non-smooth domains, the solution $u$ of $Pu = F$ will have
singularities, even if the right hand side $F$ is smooth. See
Kondratiev's fundamental 1967 paper \cite{Kondratiev67} for the case of
domain with conical points and Dauge's comprehensive Lecture Notes
\cite{daugeBook} for the case of polyhedral domains. See 
\cite{ApelNicaise98,BBX2003a,BBX2003b,BNZ3D1,CDN12,MR3440270,KMR,liMN10,liN09,NP,MR3300043} 
for a sample of related results. These theoretical results have been a critical 
ingredient in developing effective numerical methods approximating singular 
solutions. See for example \cite{BKP79, BZ01}. In addition, we  mention that  
estimates for equations on conical manifolds  can also  be obtained using the 
method of layer potentials (see, for example, \cite{MR2988781, MR1658089, 
MR3116171, MR1986694, LQ16} and references therein). 

For polygonal domains (and, more generally, for domains with conical
points), Kondratiev's results mentioned above extend the isomorphism
in \eqref{eq.classical} to polygonal domains by replacing the usual
Sobolev spaces $H^m(\dom)$ with the Kondratiev type Sobolev
spaces. Let $\dom$ be then a curvilinear polygonal domain (see
Definition \ref{def.polygon}, in particular, the sides are not
required to be straight), and $r_{\dom}>0$ be a smooth function on
$\dom$ that coincides with the distance to its vertices when close to
the vertices. We let
\begin{equation}\label{eq.def.Kondratiev}
 \maK_a^m(\dom) \ := \ \{ \, u : \dom \to \CC \, \vert
 \ r_{\dom}^{|\alpha| -a} \pa^{\alpha} u \in L^2(\dom), \ |\alpha|
 \leq m \, \} \,,
\end{equation}
where $\pa_i := \frac{\pa}{\pa x_i}$, $i=1, \ldots, d$, and
$\pa^{\alpha} := \pa_1^{\alpha_1}\pa_2^{\alpha_2} \dots
\pa_d^{\alpha_d}$.  Kondratiev's results \cite{Kondratiev67} (see also
\cite{CDN12, KMR}) give that the Laplacian $\Delta := \sum_{i\leq
  d}\pa_i^2$ induces an isomorphism $\Delta : \maK_{a+1}^{m+1}(\dom)
\cap \{u\vert_{\pa \dom} = 0\} \to \maK_{a-1}^{m-1}(\dom)$ for all $m
\in \ZZ_{+} = \{0, 1, \ldots \}$ and all $|a| < \pi/\alpha_{MAX}$,
where $\alpha_{MAX}$ is the maximum angle of $\dom$.  That is,
\begin{equation}
 \Delta : \maK_{a+1}^{m+1}(\dom) \cap \{u\vert_{\pa \dom} = 0\} \,
 \stackrel{\sim}{\longrightarrow} \, \maK_{a-1}^{m-1}(\dom)
\end{equation}
is a continuous bijection with continuous inverse for the indicated
values of $a$ and all non-negative integers $m$. One can extend this
result by interpolation to the usual range of values for $m$,
\cite{LionsMagenes1}.  A similar result holds also for more general
strongly elliptic operators \cite{KMR}.  In \cite{BNZ3D1}, this result
of Kondratiev was extended to three dimensional polyhedral domains and
in \cite{BMNZ} it was extended to general $d$-dimensional polyhedral
domains. In three dimensions and higher, this type of results is not
enough for numerical methods. Thus, in \cite{BNZ3D2}, an anisotropic
regularity and well-posedness result was proved for three dimensional
polyhedral domains. See also \cite{CDN12} for further references and
for related results, including analytic regularity.

In this paper, we generalize Kondratiev's result by allowing
low-regularity coefficients and by describing the dependence of the
solution on these coefficients.  To state our main result, let us fix
some notation. Let $\coeff := (a_{ij}, b_{i}, c)$ be the coefficients
of
\begin{equation}\label{eq.def.P}
 p_{\coeff} u \ede - \sum_{i,j=1}^{d} \pa_i (a_{ij} \pa_j u) +
 \sum_{i=1}^d b_{i} \pa_i u - \sum_{i=1}^{d} \pa_i (b_{d+i} u) + cu\,,
\end{equation}
a second order differential operator in divergence form on some domain
$\dom \subset \RR^{d}$.  Many concepts discussed in the paper make
sense for any dimension $d \geq 1$. Nevertheless, the main results we
prove are for $d = 2$. Thus, we assume for the rest of this
introduction that $\dom$ is a two-dimensional curvilinear polygonal
domain.  The coefficients $\coeff$ of the operator $p_{\coeff}$ are
obtained using weighted $\maW^{m, \infty}$-type space defined by
\begin{equation}\label{eq.def.maWm} 
 \maW^{m, \infty}(\dom) \ede \{\, u : \dom \to \CC \, \vert \ 
 r_{\dom}^{|\alpha|}\pa^{\alpha} u 
 \in L^{\infty}(\dom), \ |\alpha| \le m \, \}\,,
\end{equation}
where $r_{\dom}$ is as in Equation \eqref{eq.def.Kondratiev} (that is,
it is equal to the distance function to the conical points when close
to those points).  We fix for the rest of the introduction $m \in
\ZZ_{+} := \{0, 1, \ldots \}$ and we assume that $a_{ij}, r_{\dom}
b_{i}, r_{\dom}^{2} c \in \maW^{m, \infty}(\dom)$.  We let
\begin{equation}\label{eq.def.normalpha}
  \|\coeff\|_{Z_m} \ede \max\{ \| a_{ij} \|_{\maW^{m, \infty}(\dom)},
  \, \| r_{\dom} b_{i}\|_{ \, \maW^{m, \infty}(\dom)}, \, \|
  r_{\dom}^{2} c \|_{\maW^{m, \infty}(\dom)} \, \}\,,
\end{equation}
(notice the factors involving $r_{\dom}$!),\ and for $P= p_\beta$ and
$V=H^1_0(\dom)$, define
\begin{equation}\label{eq.def.rho}
 \rho(P) \ede \inf\, \frac{ \Re (Pv, v)}{\ \|v\|_{H^1(\dom)}^{2}}\, ,
 \quad v \in V \,,\ v \neq 0 \, ,
\end{equation}
where $\Re(z) = \Re z$ denotes the real part of $z$. Our main result
{\em for Dirichlet boundary conditions in two dimensions} is as
follows.

\begin{theorem}\label{thm.main} 
 Let $\dom \subset \RR^2$ be a curvilinear polygonal domain and
 $p_{\coeff} : H^1_0(\dom) \to H^{-1}(\dom)$ be as in
 \eqref{eq.def.P}.  If $\rho(\coeff) := \rho(p_{\coeff}) > 0$, then
 there exists $\eta > 0$ such that
 \begin{equation}
  p_{\coeff} \, : \, \maK_{a+1}^{m+1}(\dom) \cap \{\, u\vert_{\pa
    \dom} = 0 \, \} \, \to \, \maK_{a-1}^{m-1}(\dom)
 \end{equation}
 is an isomorphism for $|a| < \eta$ and $p_{\coeff}^{-1} :
 \maK_{a-1}^{m-1}(\dom) \to \maK_{a+1}^{m+1}(\dom) \cap \{u\vert_{\pa
   \dom} = 0\}$ depends analytically on the coefficients $\coeff :=
 (a_{ij}, b_{i}, c)$ and has norm
 \begin{equation*}
  \|p_{\coeff}^{-1}\| \, \le \, C_m (\rho(\coeff) - \gamma_1 |a| -
  \gamma_2 a^2)^{-N_m-1} \|\coeff\|_{Z_m}^{N_m}\,,
 \end{equation*}
 with $C_m, \gamma_1, \gamma_2$, and $N_m \ge 0$ independent of
 $\coeff$.
\end{theorem}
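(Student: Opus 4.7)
The plan is to reduce the full theorem to the base case $m = a = 0$, which follows from Lax--Milgram, and then bootstrap separately in the weight parameter $a$ (via a conjugation by the power $r_\dom^a$) and in the differentiation order $m$ (via commutators with a suitable family of vector fields). The weighting conventions in \eqref{eq.def.Kondratiev}--\eqref{eq.def.normalpha}---in particular, the factors $r_\dom$ and $r_\dom^2$ in front of the lower-order coefficients $b_i$ and $c$---are arranged so that coefficient multiplication and commutators preserve the Kondratiev scale with explicit control by $\|\coeff\|_{Z_m}$.

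\textbf{Base case and weight shift.} On a curvilinear polygon, $\maK_1^1(\dom) \cap \{u\vert_{\pa\dom} = 0\} = H^1_0(\dom)$ and $\maK_{-1}^{-1}(\dom) = H^{-1}(\dom)$ with equivalent norms. The scaled norms in $\|\coeff\|_{Z_0}$, combined with Hardy's inequality (used to absorb the $r_\dom^{-1}$ in the lower-order terms), make $p_\coeff$ bounded $H^1_0 \to H^{-1}$ by a multiple of $\|\coeff\|_{Z_0}$, so the hypothesis $\rho(\coeff) > 0$ yields, via Lax--Milgram, the base-case inverse with $\|p_\coeff^{-1}\| \le \rho(\coeff)^{-1}$. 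For small $|a|$ I then consider the conjugated operator
\begin{equation*}
  p_\coeff^{(a)} \ede r_\dom^{-a}\, p_\coeff\, r_\dom^{a} \; = \; p_\coeff + a\, L_1^\coeff + a^2\, L_2^\coeff\,,
\end{equation*}
where $L_1^\coeff, L_2^\coeff$ are first- and zeroth-order operators whose (correctly weighted) coefficients are controlled by $\|\coeff\|_{Z_0}$. A Cauchy--Schwarz/Hardy estimate on $\Re(L_j^\coeff v, v)$ over $H^1_0(\dom)$ produces the coercivity
\begin{equation*}
  \Re\bigl(p_\coeff^{(a)} v, v\bigr) \; \ge \; \bigl(\rho(\coeff) - \gamma_1 |a| - \gamma_2 a^2\bigr)\, \|v\|_{H^1(\dom)}^2\,,
\end{equation*}
which fixes $\eta$ as the smallest positive root of the quadratic and produces the denominator of the claimed bound. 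Since multiplication by $r_\dom^a$ is an isomorphism $\maK_b^k(\dom) \to \maK_{b+a}^k(\dom)$, invertibility of $p_\coeff^{(a)}$ on the $a=0$ scale transfers back to invertibility of $p_\coeff$ on the weighted scale.

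\textbf{Bootstrap in $m$ and analytic dependence.} I proceed by induction on $m$. Near each conical point, the scaling vector fields $r_\dom \pa_1, r_\dom \pa_2$ locally generate $\maK_a^{m}(\dom)$ over $\maK_a^{m-1}(\dom)$, and away from the vertices the Euclidean $\pa_i$ do the same. For any such admissible $X$ one has a commutator identity $[X, p_\coeff] = p_{\delta(\coeff)} + \text{(lower order)}$, where $\delta(\coeff)$ consists of (appropriately weighted) first derivatives of $\coeff$ and is therefore bounded by $\|\coeff\|_{Z_m}$. Applying $X_{i_1}\cdots X_{i_k}$ with $k \le m$ to $p_\coeff u = F$ and solving for the top-order piece via the $m=0$ inverse picks up, at each inductive step, one factor of $\|\coeff\|_{Z_m}$ in the numerator and one factor of $\rho(\coeff) - \gamma_1|a| - \gamma_2 a^2$ in the denominator, yielding the stated power $N_m$ after bookkeeping. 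The analytic dependence on $\coeff$ is automatic: the map $\coeff \mapsto p_\coeff$ is affine (hence entire), and operator inversion is analytic on the open set of invertible operators, so $\coeff \mapsto p_\coeff^{-1}$ is analytic on the open set where $\rho(\coeff) - \gamma_1|a| - \gamma_2 a^2 > 0$.

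\textbf{Main obstacle.} The most delicate step is the commutator bookkeeping on a \emph{curvilinear} (rather than straight) polygon: the scaling vector fields $r_\dom \pa_i$ interact cleanly with $p_\coeff$ only in straightened coordinates near a vertex, so one must build a partition of unity that diffeomorphically straightens each conical neighborhood and then verify that the changes of variables leave both the $\maK^m_a(\dom)$ and the $\maW^{m,\infty}(\dom)$ norms equivalent to their model versions, with constants depending only on the geometry of $\dom$ and not on $\coeff$. This uniformity in $\coeff$---rather than the inductive skeleton itself---is where I expect the paper to do its heaviest technical work.
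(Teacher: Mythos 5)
Your overall skeleton matches the paper's: reduce to $a=0$ by conjugation with $r_\dom^a$ (the paper's Remark \ref{rem.a.neq.0} and Equation \eqref{eq.rel.Ps}), settle $m=0$ by Lax--Milgram (Lemma \ref{lemma.LaxMilgram}), then bootstrap in $m$ using commutators of $p_\coeff$ with degenerate vector fields. Your observations on Hardy's inequality, on the equivalence $\maK_1^1 \cap\{u|_{\pa\dom}=0\} = H^1_0(\dom)$, and on the analyticity of $\coeff \mapsto p_\coeff^{-1}$ (affine composed with inversion) are exactly what the paper uses.

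The genuine gap is in the inductive step. You say you will apply $X_{i_1}\cdots X_{i_k}$ to $p_\coeff u = F$ and then ``solve for the top-order piece via the $m=0$ inverse,'' treating all admissible vector fields alike. This fails for vector fields that are not tangent to $\pa\dom$: if $X$ is transverse to the boundary, then $Xu$ does not satisfy the homogeneous Dirichlet condition, so you cannot feed $XF - [X, p_\coeff]u$ into $p_\coeff^{-1}$ to recover $Xu$. The paper handles this by a careful split. Near each vertex it uses polar-type vector fields $X_k = r\pa_r$ (tangential to both incident edges) and $Y_k = \pa_\theta$ (normal); globally each $X_k$ is extended to be tangent to \emph{all} edges. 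For the tangential $X_k$ one integrates to a one-parameter group preserving $V$ and applies the Nirenberg/difference-quotient identity (Lemma \ref{lemma.Nirenberg}), which is the step you sketch. But for the normal direction the paper must instead use uniform strong ellipticity: it writes $r_\dom^2 p_\coeff = c_{22}Y_k^2 + Q_k$ in $U_k$ with $c_{22}^{-1}$ controlled by $C_{use}^{-1}\le \rho(\coeff)^{-1}$ (Lemma \ref{lemma.Cuse}), and solves algebraically $Y_k^2 u = c_{22}^{-1}(r_\dom^2 p_\coeff u - Q_k u)$ to estimate the normal--normal derivative. Your choice $r_\dom\pa_1, r_\dom\pa_2$ near a vertex does not build this split in, since neither is generically tangent to both incident edges. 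Relatedly, your claim that each step contributes one factor of $\|\coeff\|_{Z_m}$ and one of the coercivity constant cannot yield the paper's $N_m = 2^{m+2}-m-3$; the actual recursion $N_m = 2N_{m-1}+m+1$ arises precisely because the $Y_k$ estimate re-uses the $X_k$ estimate and the bound $\|c_{22}^{-1}\|_{\maW^{m-1,\infty}}\lesssim\rho(\coeff)^{-m}\|\coeff\|^{m-1}$. Your ``main obstacle'' paragraph identifies the coordinate-straightening as the delicate point, but that is routine once Definition \ref{def.polygon} and Proposition \ref{prop.alt.def} are in place; the real technical work the paper does is the tangential/normal split and the exponent bookkeeping, both of which your proposal glosses over.
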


Since the solution $u$ of the equation $p_{\coeff}u = F$, $u=0$ on the
boundary, is in $\maK_{a+1}^{m+1}(\dom)$ for $F \in
\maK_{a-1}^{m-1}(\dom)$, $|a|< \eta$, we obtain the usual applications
to the Finite Element Method on straight polygonal domains for $m \ge
1$ and $a > 0$.

Theorem \ref{thm.main} is a consequence of Theorem \ref{thm.main2},
which deals with the mixed boundary value problem
\begin{equation}\label{eq.mixed}
 \begin{cases}
  \ p_{\coeff} u = f & \mbox{ in } \dom\\
  \ \ \ \ \ u = 0 & \mbox{ on } \pa_D \dom\\ 
  \ \, \pa^{\coeff}_{\nu} u = h & \mbox{ on } \pa_N \dom\,,
 \end{cases}
\end{equation}
where $(\pa^{\coeff}_{\nu}v) := \sum_{i=1}^{d} \nu_i ( \sum_{j=1}^{d}
a_{ij} \pa_j v + b_{d+i} v \big)$. An exotic example to which Theorem
\ref{thm.main2} applies is that of the Schroedinger operator $H := -
\Delta + c r_{\Omega}^{-2}$ on $\dom$ with {\em pure Neumann} boundary
conditions. The main novelties of Theorem \ref{thm.main2} (and of the 
paper in general) are the following: 
\begin{enumerate}[(i)]
 \item The precise estimate on the norm of the inverse of $p_{\coeff}$ 
 seems to be new even in the smooth case.
 \item We deal with singular coefficients of a type that has not been 
 systematically considered in the literature on non-smooth domains. 
 Thus our coefficients have both singular parts at the corners of 
 the form $r_{\Omega}^{-j}$ ($j \le 2$) and have limited regularity away 
 from the corners.
 \item We provide a new method to obtain higher regularity in weighted
 Sobolev spaces using divided differences; a method that is, in fact, 
 closer to the one used in the classical case of smooth domains. 
\end{enumerate}
 
The paper is organized as follows. In Section \ref{sec:Notation}, we
introduce the notation and necessary preliminary results for our
problem in the usual Sobolev spaces. In particular, an enhanced
Lax-Milgram Lemma (Lemma \ref{lemma.LaxMilgram}) provides uniform
estimates for the solution of our problem \eqref{eq.mixed} and
analytic dependence of this solution on the coefficients $\coeff$.  In
Section \ref{sec3}, we first define curvilinear polygonal domains
(Definition \ref{def.polygon}).  We then provide several equivalent
definitions of the weighted Sobolev spaces $\maK_{a}^{m}(\dom)$ and
the form of our differential operators.  Then, in Section \ref{sec4},
using local coordinate transformations, we derive our main result, the
analytic dependence of the solution on the coefficients in high-order
weighted Sobolev spaces (Theorem \ref{thm.main2}). Finally,
Section \ref{sec5} contains some consequences of Theorem \ref{thm.main2}
and some extensions. In particular, we consider a framework for the 
pure Neumann problem with inverse square potentials at vertices.

We thank Thomas Apel, Martin Costabel, Monique Dauge, Markus Hansen,
Serge Nicaise, and Christoph Schwab for useful comments.

\section{Coercivity in classical Sobolev spaces}
\label{sec:Notation}

In this section, we recall some needed results on coercive operators.

\subsection{Function spaces and boundary conditions}
Throughout the paper, $\dom \subset \RR^d$, $d \ge 1$, denotes a
connected, bounded domain. Further conditions on $\dom$ will be imposed
in the next section.  As usual, $H^m(\dom)$ denotes the space of
(equivalence classes of) functions on $\dom$ with $m$ derivatives in
$L^2(\dom)$. When we write $A \subset B$, we allow also $A = B$.  In
what follows, $\pa_D \dom $ is a suitable closed subset of the
boundary $\pa \dom $, where we impose {\em Dirichlet} boundary
conditions.

To formulate our problem \eqref{eq.mixed}, it is necessary to
introduce the right spaces.  We shall rely heavily on the weak
formulation of this problem.  Thus, let us recall that $H^{-1}(\dom)$
is defined as the dual space of
\begin{equation}\label{eq.def.H1}
  H^1_0(\dom ) \ede \{\, u \in H^1(\dom ) \,\vert \ u \vert_{\pa \dom
  } = 0 \, \} \,,
\end{equation}
with pivot $L^2(\dom )$. We introduce homogeneous essential boundary
conditions abstractly, by considering a subspace $V$,
\begin{equation}\label{eq.def.V}
  H^1_0(\dom ) \subset V \subset H^1(\dom ) \,,
\end{equation}
such that $V$ is a Banach space in its own topology and $H^1_0(\dom)$ is
a closed subspace of $V$. In many applications, $V$ is closed in $H^{1}(\dom)$,
but this is not the case in our application to the Neumann problem
with inverse square potentials at vertices (see Theorem \ref{thm.DN}).
Let $V^{*}$ be {\em the dual of
  $V$ with pivot space $L^2(\dom)$.} Therefore, by $(\ , \ )$ we shall
denote {\em both} the inner product $(f, g) = \int_\dom f(x)
\overline{g(x)} \,dx$ on $L^2(\dom)$, and by continuous extension,
also the duality pairing between $V^{*}$ and $V$. Thus, $V^{*} =
H^{-1}(\dom)$ if $V = H_0^1(\dom)$; otherwise, $V^{*}$ will
incorporate also non-homogeneous natural boundary conditions.

For Problem \eqref{eq.mixed}, we choose
\begin{equation}\label{eq.def.H1D}
 V \, = \, H^1_D(\dom ) \ede \{ \, u \in H^1(\dom) \, \vert \ u=0
 \mbox{ on } \pa_D \dom \, \} \,,
\end{equation}
{and assume that the Neumann part of the boundary contains no adjacent
  edges.}

\subsection{The weak formulation} 
Recall from Equation \eqref{eq.def.P} the differential operator
$p_{\coeff} u := - \displaystyle{\sum_{i,j=1}^d} \pa_i \big( a_{ij}
\pa_j u \big) + \sum_{i=1}^d b_{i} \pa_i u - \sum_{i=1}^{d}
\pa_i(b_{d+i} u) + c u,$ which is used in our problem
\eqref{eq.mixed}, where $a_{ij}, b_{i}, c: \dom \to \CC$ denote
measurable {\em complex valued} functions as in \eqref{eq.def.P} and
$\coeff$ denotes the coefficients $(a_{ij}, b_{i}, c)$. We shall make
suitable further assumptions on these coefficients below.

Equation \eqref{eq.mixed}, makes sense as formulated only if $u$ is
regular enough (at least in $H^{3/2+\epsilon}$, to validate the
Neumann derivatives at the boundary).  In order to use the Lax-Milgram
Lemma for the problem \eqref{eq.mixed}, we formulate our problem in a
more general way that allows $u \in V$. To this end, let us introduce
the Dirichlet form $B^{\coeff}$ associated to \eqref{eq.mixed}, that
is, the sesquilinear form
\begin{multline}\label{eq.def.B}
  B^{\coeff}(u,v) \, := \, \sum_{i,j=1}^d \big( a_{ij} \pa_j u, \pa_i
  v \big ) + \sum_{i=1}^d \big ( b_{i} \pa_i u, v \big ) +
  \sum_{i=1}^{d} \big (b_{d+i} u, \pa_i v \big ) + \big (c u, v \big )
  \\
  = \int_{\dom} \Big [\sum_{i=1}^d \big( \sum_{j=1}^d a_{ij}(x) \pa_j
    u(x) + b_{d+i}(x) u(x) \big ) \pa_i \overline{v(x)}
  + \Big ( \sum_{i=1}^d b_{i}(x) \pa_i u(x) + c(x) u(x) \Big )
  \overline{v(x)}\Big ] dx \, ,
\end{multline}
where $dx$ denotes the volume element in the Lebesgue integral on
$\dom \subset \RR^{d}$.

\begin{remark}\label{rem.weak}
Let $F(v)=\int_\dom f(x)v(x)dx+\int_{\pa_N\dom} h(x)v(x)dS.$ Then the
weak variational formulation of Equation \eqref{eq.mixed} is: Find $u
\in V$, such that
\begin{equation}\label{eq.weak}  
  B^{\coeff} (u, v) \ = \ F(\overline{v})\,, \ \ \mbox{ for all } \ v
  \in V \,.
\end{equation} 
We then define $P^{\coeff} : V \to V^{*}$ by
\begin{equation}\label{eq.def.weak.Pb}
 (P^{\coeff}u, v) := B^{\coeff}(u, v) \,, \ \ \mbox{ for all } \ u, v
  \in V \,.
\end{equation}
Thus, the weak formulation of Equation \eqref{eq.mixed} is equivalent
to
\begin{equation}\label{eq.problem}
 P^{\coeff} u \ = \ F \in V^{*}\,.
\end{equation}
We are interested in the dependence of $u$ on $F$ and on the
coefficients $\coeff := (a_{ij}, b_{i}, c)$ of $P^{\coeff}$.  We
notice that if the Neumann part of the boundary $\pa_N \dom$ is empty,
then $p_{\coeff}$ and $P^{\coeff}$ can be identified, but this is not
possible in general.  In fact, we are looking for an analytic
dependence of the solutions on the coefficients. For this reason, it
is useful to consider complex Banach spaces and complex valued
coefficients.
\end{remark}

\subsection{Bounded forms and operators}

For two Banach spaces $X$ and $Y$, let $\maL(X; Y)$ denote the Banach
space of continuous, linear maps $T: X \to Y$ endowed with the
operator norm
\begin{equation}
  \|T\|_{\maL(X; Y)} \ede \sup_{x \neq 0} \,
  \frac{\|Tx\|_{Y}}{\|x\|_{X}} \, .
\end{equation}
We write $\maL(X) := \maL(X; X)$.

Let us define $Z$ to be the set of coefficients $\coeff = (a_{ij},
b_{i}, c)$ such that the form $B^{\coeff}$ is defined (and continuous)
on $V \times V$, and we give $Z$ the induced norm. 
Thus $Z$ is given the induced topology from $\maL(V; V^{*})$.

\begin{corollary}\label{cor.cont}
 The map $Z \ni \coeff \to P^{\coeff} \in \maL(V; V^{*})$ is well
 defined and continuous. For each $0 < r \le \infty$, the set
\begin{equation} 
 \{ \, \coeff \in Z ,\, P^{\coeff} \mbox{ is invertible and }
 \|(P^{\coeff})^{-1}\|_{\maL(V^{*}; V)} < r \, \}
\end{equation} 
 is open in $Z$.
\end{corollary}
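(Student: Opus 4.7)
The first assertion is essentially tautological given the setup. By the definition of $Z$, every $\coeff \in Z$ yields a bounded sesquilinear form $B^{\coeff}$ on $V \times V$; the identification of bounded sesquilinear forms with bounded linear maps $V \to V^{*}$ via $(P^{\coeff}u, v) = B^{\coeff}(u, v)$ then shows $P^{\coeff} \in \maL(V; V^{*})$, so the map is well defined. Continuity of $\coeff \mapsto P^{\coeff}$ is immediate because, by the preceding definition, the norm on $Z$ is precisely the one induced from $\maL(V; V^{*})$ through this very map; the map is, in fact, an isometric embedding.

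For the second assertion I would run the standard Neumann series argument. Fix $\coeff_0 \in Z$ with $P_0 := P^{\coeff_0}$ invertible and $s := \|P_0^{-1}\|_{\maL(V^{*}; V)} < r$. For any $\coeff \in Z$ write
\begin{equation*}
 P^{\coeff} \seq P_0 + (P^{\coeff} - P_0) \seq P_0\bigl(I_V + P_0^{-1}(P^{\coeff} - P_0)\bigr),
\end{equation*}
where $I_V$ denotes the identity on $V$ and $P_0^{-1}(P^{\coeff}-P_0) \in \maL(V)$ has operator norm bounded by $s\|P^{\coeff} - P_0\|_{\maL(V; V^{*})}$. Whenever $s\|P^{\coeff} - P_0\|_{\maL(V; V^{*})} < 1$, the Neumann series gives that $I_V + P_0^{-1}(P^{\coeff} - P_0)$ is invertible in $\maL(V)$ with inverse of norm at most $(1 - s\|P^{\coeff}-P_0\|)^{-1}$; hence $P^{\coeff}$ is invertible with
\begin{equation*}
 \|(P^{\coeff})^{-1}\|_{\maL(V^{*}; V)} \, \le \, \frac{s}{1 - s\|P^{\coeff} - P_0\|_{\maL(V; V^{*})}}\,.
\end{equation*}

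To finish, choose $\epsilon > 0$ so small that $s/(1 - s\epsilon) < r$; since $\coeff \mapsto P^{\coeff}$ is an isometric embedding of $Z$ into $\maL(V; V^{*})$, the set of $\coeff \in Z$ with $\|P^{\coeff} - P_0\|_{\maL(V; V^{*})} < \epsilon$ is a neighborhood of $\coeff_0$, and on that neighborhood $P^{\coeff}$ is invertible with $\|(P^{\coeff})^{-1}\|_{\maL(V^{*}; V)} < r$. This establishes openness.

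I do not expect a real obstacle here: the argument is the textbook ``set of invertibles is open'' reasoning, transported from a Banach algebra to $\maL(V; V^{*})$ by composing with a fixed invertible reference operator. The only point demanding a little care is keeping straight the distinction between $\maL(V; V^{*})$, where the $P^{\coeff}$ and the $Z$-topology live, and $\maL(V^{*}; V)$, where the inverses live, so that the operator norms used in the Neumann series estimate are legal compositions.
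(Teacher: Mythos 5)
Your proof is correct. It differs from the paper's in the second step: where you construct an explicit neighborhood of a given $\coeff_0$ by running the Neumann series perturbation of the reference operator $P_0 := P^{\coeff_0}$ and producing the quantitative bound $\|(P^{\coeff})^{-1}\| \le s/(1 - s\|P^{\coeff}-P_0\|)$, the paper argues purely topologically: it cites that the set $\maL_{\operatorname{inv}}(V; V^{*})$ of invertibles is open, that $P \mapsto P^{-1}$ is continuous on it (so $\{P : \|P^{-1}\| < r\}$ is open in $\maL(V; V^{*})$), and then observes that the set in question is the preimage of that open set under the continuous map $\coeff \mapsto P^{\coeff}$. Your version is the more self-contained and gives an explicit modulus of openness, while the paper's is shorter by offloading the content to standard facts (which in the paper are themselves justified by the same Neumann series, in Lemma~\ref{lemma.analytic}(ii)). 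One small point of terminology: you call $\coeff \mapsto P^{\coeff}$ an ``isometric embedding,'' but the paper only endows $Z$ with the norm induced from $\maL(V; V^{*})$ — this makes the map isometric, but injectivity of $\coeff \mapsto P^{\coeff}$ is not explicitly addressed in the paper; nothing in your argument depends on it, so this is cosmetic.
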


\begin{proof}
By the definition of $Z$, $P^{\coeff} : V \to V^{*}$ is a bounded
operator and that the map $Z \ni \coeff \to P^{\coeff} \in \maL(V;
V^{*})$ is continuous.  Next, we know that the set $\maL_{inv}(V;
V^{*})$ of invertible operators in $\maL(V; V^{*})$ is open and that
the map $P \to P^{-1}$ is continuous on $\maL_{inv}(V,
V^{*})$. Therefore the set $\{P \in \maL(V; V^{*})\, \vert
\ \|P^{-1}\|< r\}$ is open in $\maL(V; V^{*})$. Our desired set is the
inverse image of this set via the continuous map $\coeff \to
P^{\coeff}$. Since the inverse image of an open set via a continuous
map is open, the result follows.
\end{proof}

It will be convenient to use a slightly enhanced version of the
well-known Lax-Milgram Lemma stressing the analytic dependence on the
operator and on the data.  We thus first review a few basic
definitions and results on analytic functions \cite{DineenBook}.

Let $X$ and $Y$ be Banach spaces.  In what follows, $\maL_i(Y; X)$
will denote the space of continuous, multi-linear functions $L : Y
\times Y \times \ldots \times Y \to X$, where $i$ denotes the number
of copies of $Y$. The norm on the space $\maL_i(Y; X)$ is
\begin{equation*}
  \|L\|_{\maL_i(Y; X)} \ede \sup_{\|y_j\| \le 1} \|L(y_1, y_2, \ldots,
  y_i)\|_{X} \,.
\end{equation*}
Of course, $\maL_1(Y; X) = \maL(Y; X)$, isometrically.  We shall need
analytic functions defined on open subsets of a Banach space. Let $U
\subset Y$ be an open subset, then $\maC^{k}(U; X)$, $k \in \ZZ_+ \cup
\{\infty,\, \omega\}$, denotes the space of functions $v : U \to X$
with $k$ continuous (Fr\'echet) derivatives $D^i v : U \to \maL_{i}(Y;
X)$, $i\leq k$, with $D^i_a v \in \maL_i(Y; X)$ denoting the value of
$D^iv$ at $a$. Similarly, $\mcb^{k}(U; X) \subset \maC^{k}(U; X)$, $k
\in \ZZ_+ \cup \{\infty,\, \omega\}$, denotes the subspace of those
functions $v \in \maC^{k}(U; X)$ for which the derivatives $D^i v$, $i
\le k$, are bounded on $U$. For each finite $j$, we let
\begin{equation}\label{eq.def.norm}
  \|v\|_{\mcb^{j}(U; X)} \, := \, \sup_{ i \le j,\, y \in U }
  \|D^i_{y} v\|_{\maL_i(Y; X)}
\end{equation}
denotes the natural Banach space norm on $\mcb^{j}(U; X)$. 

The case $k = \omega$ refers to analytic functions, that is,
$\maC^{\omega}(U; X)$ denotes the space of functions $f : U \to X$
that have, for any $a \in U$, a uniformly convergent power series
expansion
\begin{equation*}
 f(x) \, = \, \sum_{k=0}^\infty \, \frac{1}{k!} \, D^k_{a} f(x-a, x-a,
 \ldots, x-a) \,,
\end{equation*}
for $x$ in a small, non-empty open ball centered at $a$.  If $k$ is
not finite, that is, if $k = \infty$ or $k = \omega$, we endow
$\mcb^{k}(U; X)$ with the Fr\'echet topology defined by the family of
seminorms $\| \, \cdot \, \|_{\mcb^{j}(U; X)}$, $j \ge 1$.  We shall
need the following standard result.

\begin{lemma} \label{lemma.analytic} Let $Y_1, Y_2$ be Banach
  spaces.
\begin{enumerate}[(i)]
\item The map $\maL(Y_1; Y_2) \times Y_1 \ni (T, y) \to T y \in Y_2$
  is analytic.
\item The map $T\to T^{-1}\in\maL(Y_1)$ is analytic on the open set
  $\maL_{\operatorname{inv}}(Y_1)$ of invertible operators in
  $\maL(Y_1) := \maL(Y_1; Y_1)$.
\end{enumerate}
\end{lemma}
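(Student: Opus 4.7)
The plan is to deduce both statements from standard facts about continuous multilinear maps and the Neumann series, since both are classical in the Banach-space-analytic setting.

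For part (i), I would observe that the evaluation map $E : \maL(Y_1; Y_2) \times Y_1 \to Y_2$, $E(T, y) := Ty$, is bilinear and bounded: $\|Ty\|_{Y_2} \le \|T\|_{\maL(Y_1; Y_2)} \|y\|_{Y_1}$. Given $(T_0, y_0)$, expand
\begin{equation*}
 E(T_0 + S, y_0 + z) \, = \, T_0 y_0 \, + \, T_0 z \, + \, S y_0 \, + \, S z \, ,
\end{equation*}
which is a (finite) polynomial in the increment $(S, z)$, so it equals its own Taylor series at $(T_0, y_0)$, and this series converges on the entire product space. Hence $E$ belongs to $\maC^{\omega}(\maL(Y_1; Y_2) \times Y_1; Y_2)$.

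For part (ii), the plan is the standard Neumann series argument. Fix $T_0 \in \maL_{\operatorname{inv}}(Y_1)$ and write an arbitrary $T = T_0 + S$ as $T = T_0 (I + T_0^{-1} S)$. When $\|S\|_{\maL(Y_1)} < 1/\|T_0^{-1}\|_{\maL(Y_1)}$, the operator $T_0^{-1} S$ has norm strictly less than one, so the Neumann series
\begin{equation*}
 T^{-1} \, = \, \sum_{k=0}^{\infty} (-1)^k (T_0^{-1} S)^k \, T_0^{-1}
\end{equation*}
converges absolutely in $\maL(Y_1)$ and represents the inverse. Each term $S \mapsto (-1)^k (T_0^{-1} S)^k T_0^{-1}$ is a continuous $k$-homogeneous polynomial in $S$, so this is a convergent power series expansion about $T_0$ with positive radius of convergence $1/\|T_0^{-1}\|$. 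By the definition of $\maC^{\omega}$, the map $T \mapsto T^{-1}$ is analytic on $\maL_{\operatorname{inv}}(Y_1)$.

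There is no serious obstacle: part (i) is immediate from bilinearity and boundedness, and part (ii) is the textbook Neumann series. The only point deserving a brief comment is that openness of $\maL_{\operatorname{inv}}(Y_1)$ (already used in Corollary \ref{cor.cont}) is built into the Neumann argument, since convergence of the series on the ball $\|T - T_0\| < 1/\|T_0^{-1}\|$ simultaneously shows that $T_0$ has an open neighborhood of invertible operators on which the inverse admits a uniformly convergent power series expansion.
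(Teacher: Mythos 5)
Your proof is correct and follows essentially the same route as the paper: part (i) by boundedness and bilinearity of the evaluation map, and part (ii) via the Neumann series expansion about a fixed invertible $T_0$. The paper states the series as $(T-R)^{-1} = \sum_{n\ge 0} T^{-1}(RT^{-1})^n$ while you write $\sum_{k\ge 0}(-1)^k(T_0^{-1}S)^k T_0^{-1}$, but these are the same expansion up to sign convention and the side on which $T_0^{-1}$ is factored; the argument and the radius of convergence $1/\|T_0^{-1}\|$ are identical.
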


\begin{proof}
 In (i), the desired map is bilinear, and hence analytic. To prove
 (ii), we simply write the Neumann series formula $(T-R)^{-1} =
 \sum_{n=0}^{\infty} T^{-1}(RT^{-1})^n$, which is uniformly and
 absolutely convergent for $\|R\| \|T^{-1}\| \le 1-\epsilon$,
 $\epsilon > 0$.
\end{proof}

\subsection{An enhanced Lax-Milgram Lemma}
We now recall the classical Lax-Milgram Lemma, in the form that we
will need.

\begin{definition}\label{def.coercive}
Let $H_0^1(\dom) \subset V \subset H^1(\dom)$.  A continuous operator
$P: V \to V^{*}$ is called {\em coercive on $V$} (or simply {\em
  coercive} when there is no danger of confusion) if
\begin{equation*}
 0 < \rho(P) \ede \inf_{v \in V\smallsetminus \{0\}} \, \frac{\Re (Pv,
   v)}{\ \|v\|_{V}^{2}} \, .
\end{equation*} 
\end{definition}

We shall usually write $\rho(\coeff) = \rho(P^{\coeff})$, where
$\rho(P^{\coeff})$ is as defined in Equation (\ref{eq.def.rho}).  For
$P = P^{\coeff}$, we thus have $\rho(\coeff) \|v\|_{H^{1}(\dom )}^2 =
\rho(P^{\coeff}) \|v\|_{H^{1}(\dom )}^2 \le \Re B^{\coeff}(v,v)$, for
all $v \in V$.  We shall need the following simple observation:

\begin{remark}\label{rem.LaxMilgram}
If $P : V \to V^{*}$ is coercive {on $V$} and $P_1 : V \to V^{*}$
satisfies $\|P_1\| < \rho(P)$, then $P + P_1$ is also coercive on $V$
and $\rho(P + P_1) \ge \rho(P) - \|P_1\|$. Indeed,
\begin{equation}\label{eq.pert.coerc}
 \Re \big ( (P + P_1) u, u \big ) \, \ge \, \Re (P u, u) - \|P_1\|
 \|u\|_{V}^2 \, \ge \, (\rho(P) - \|P_1\|)
 \|u\|_{V}^2\,,
\end{equation}
and hence the set $\maL(V; V^{*})_c$ of coercive operators is open in
$\maL(V; V^{*})$.
\end{remark}

Recall now the standard way of solving Equation \eqref{eq.weak} using
the Lax-Milgram Lemma for coercive operators.

\begin{lemma}[Analytic Lax-Milgram Lemma]\label{lemma.LaxMilgram}
Assume that $P : V \to V^{*}$ is coercive. Then $P$ is invertible and
$\|P^{-1}\| \le \rho(P)^{-1}$.  Moreover, the map $\maL(V; V^{*})_{c}
\times V^{*} \ni (P, F) \to P^{-1}F \in V$ is analytic. Consequently,
 \begin{equation*}
  Z \cap \maL(V; V^{*}) \times V^{*} \, \ni \, (\coeff, F) \, \to \,
  (P^{\coeff})^{-1}F \, \in \, V
 \end{equation*}
 is analytic as well.
\end{lemma}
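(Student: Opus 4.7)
The plan is to establish the result in three stages: (a) prove invertibility together with the quantitative norm estimate, (b) upgrade this to analyticity of $P \mapsto P^{-1}$ on the open set $\maL(V;V^{*})_c$, and (c) combine with the (essentially free) analyticity of $\coeff \mapsto P^{\coeff}$ to get the last assertion.

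For step (a), the key computation is
\[
 \rho(P)\|u\|_V^2 \, \le \, \Re (Pu, u) \, \le \, |(Pu,u)| \, \le \, \|Pu\|_{V^{*}} \|u\|_V,
\]
which at once gives $\|Pu\|_{V^{*}} \ge \rho(P)\|u\|_V$ for all $u \in V$. Hence $P$ is injective with closed range and, once surjectivity is secured, its inverse satisfies $\|P^{-1}\|_{\maL(V^{*};V)} \le \rho(P)^{-1}$. Surjectivity is the classical Lax--Milgram conclusion applied to the continuous sesquilinear form $(u,v) \mapsto (Pu,v)$ on $V \times V$, using that $V$ is a Hilbert space in the applications of interest (or, more abstractly, the Banach--Ne\v{c}as--Babu\v{s}ka theorem applied to the closed-range, injective $P$ and its transpose, the latter being coercive by the same estimate).

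For step (b), fix $P_0 \in \maL(V;V^{*})_c$; by Remark \ref{rem.LaxMilgram} this set is open in $\maL(V;V^{*})$. For a perturbation $Q \in \maL(V;V^{*})$, factor $P_0 + Q = P_0(I + P_0^{-1}Q)$, noting that $P_0^{-1} Q \in \maL(V)$. Whenever $\|Q\|_{\maL(V;V^{*})} \, \|P_0^{-1}\|_{\maL(V^{*};V)} < 1$, the Neumann series
\[
 (P_0 + Q)^{-1} \, = \, \sum_{n=0}^{\infty} (-P_0^{-1} Q)^n \, P_0^{-1}
\]
converges absolutely in $\maL(V^{*};V)$, and displays $Q \mapsto (P_0+Q)^{-1}$ as a convergent power series at $Q=0$ with values in $\maL(V^{*};V)$. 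Since $P_0$ was arbitrary, $P \mapsto P^{-1}$ is analytic on $\maL(V;V^{*})_c$ in the sense of Dineen. This is essentially the argument already used to prove Lemma \ref{lemma.analytic}(ii), adapted from endomorphisms to the case of operators between two different Banach spaces.

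For step (c), write the map $(P,F) \mapsto P^{-1}F$ as the composition of $(P,F) \mapsto (P^{-1}, F)$ (analytic in $P$ by step (b), and the identity in $F$) with the bilinear evaluation $\maL(V^{*};V) \times V^{*} \ni (T,F) \mapsto TF \in V$, which is analytic by Lemma \ref{lemma.analytic}(i); since a composition of analytic maps is analytic, the first conclusion follows. Finally, inspection of \eqref{eq.def.B} shows that $\coeff \mapsto B^{\coeff}$, and hence $\coeff \mapsto P^{\coeff}$, is \emph{linear} and continuous on $Z \cap \maL(V;V^{*})$, therefore analytic; composing with $(P,F)\mapsto P^{-1}F$ yields the analyticity of $(\coeff,F) \mapsto (P^{\coeff})^{-1}F$. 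The only delicate point is the surjectivity in step (a) under the slightly nonstandard abstract setup for $V$, but this is dispatched either by the Hilbert structure inherited from $H^1(\dom)$ or by invoking Banach--Ne\v{c}as--Babu\v{s}ka; the rest is routine Neumann-series bookkeeping.
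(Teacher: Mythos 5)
Your proof is correct and follows essentially the same route as the paper's: classical Lax--Milgram for invertibility and the bound $\|P^{-1}\| \le \rho(P)^{-1}$, Neumann-series analyticity of operator inversion, and composition with the bilinear evaluation map and the linear map $\coeff \mapsto P^{\coeff}$. You are slightly more careful than the paper in two spots -- explicitly adapting Lemma \ref{lemma.analytic}(ii) from $\maL(Y_1)$ to operators between the distinct spaces $V$ and $V^{*}$, and flagging that surjectivity in the abstract Banach setting for $V$ needs either the inherited Hilbert structure or Banach--Ne\v{c}as--Babu\v{s}ka -- but the underlying argument is identical.
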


\begin{proof}
 The first part is just the classical Lax-Milgram Lemma
 \cite{BrezisBookFA, Ciarlet78, MonkBook}, which states that
 ``coercivity implies invertibility'' and gives the norm estimate.
 The second part follows from Lemma \ref{lemma.analytic}. Indeed, the
 map $\Phi : \maL(V; V^{*})_{c} \times V^{*} \to V$, $\Phi (\coeff, F) :=
 (P^{\coeff})^{-1}F$ is the composition of the maps
\begin{equation*}
  \begin{gathered}
   \maL(V; V^{*})_{c} \times V^{*} \times V^{*} \, \ni\, (\coeff, F)
 \,\to\, (P^{\coeff}, F) \,\in\, \maL_{inv}(V, V^{*}) \times V^{*}\,,\\
 \maL_{inv}(V; V^{*}) \times V^{*} \,\ni\, (P, F) \,\to\, (P^{-1}, F)
 \,\in\, \maL(V^{*}; V) \times V^{*}\,, \ \mbox{ and }\\
 \maL(V^{*}; V) \times V^{*} \,\ni\, (P^{-1}, F) \,\to\, P^{-1}F
 \,\in\, V \,.
  \end{gathered}
\end{equation*}
 The first of these three maps is well defined and linear by the
 classical Lax-Milgram Lemma.  The other two maps are analytic by
 Lemma \ref{lemma.analytic}.  Since the composition of analytic
 functions is analytic, the result follows.
\end{proof}

Examples of coercive operators are obtained using ``uniformly strongly
elliptic'' operators, whose definition we recall next.

\begin{definition}\label{def.use}
Let $\coeff \in {Z} $. The operator $P^{\coeff}$ is called {\em
  uniformly strongly elliptic} if there exists $C > 0$ such that
\begin{equation}\label{eq.unif.strong}
   \sum_{ij=1}^{d} \, \Re \big ( a_{ij}(x) \xi_i \xi_j \big) \, \ge \,
   C \|\xi\|^2 \,,
\end{equation}
for all $\xi = (\xi_i) \in \RR^d$ and all $x \in
\overline{\dom}$. Here $\| \cdot \|$ denote the standard euclidean
norm on $\RR^{d}$. The largest $C$ with the property in
\eqref{eq.unif.strong} will be denoted $C_{use}(\coeff)$.
\end{definition}

Then, we have the following standard example.

\begin{example}
Let $\coeff \in {Z} $, as in Definition \ref{def.use}.  We shall
regard a matrix $X := [x_{ij}]$, $(X)_{ij} = x_{ij}$, as a linear
operator acting on $\CC^{d}$ by the formula $X \zeta = \xi$, where
$\xi_{i} = \sum_{j} x_{ij}\zeta_{j}$. We consider the adjoint and
positivity with respect to the usual inner product on $\CC^{d}$.  We
thus have $X \ge 0$ if, and only if $(X \xi, \xi) = \sum_{ij}
x_{ij}\xi_j \overline{\xi_i} \ge 0$ for all $\xi \in \CC^{d}$. Also,
recall that $X^{*}$, the adjoint of the matrix $X$, has entries
$(X^*)_{ij} = \overline{x_{ji}}$. Then $P^{\coeff}$ is uniformly
strongly elliptic if, and only if, there exists $\gamma > 0$ such that
the matrix $a(x) := [a_{ij}(x)]$ of highest order coefficients of
$P^{\coeff}$ satisfies
\begin{equation}\label{eq.c.invert}
  a(x) + a(x)^* \, \ge \, \gamma I_{d}\,, \quad \mbox{for all } x \in
  \dom \,,
\end{equation}
where $I_{d}$ denotes the unit matrix on $\CC^{d}$.  Assume also that
$b_{i} = c = 0$. Then,
\begin{multline*}
  2 \Re (P^{\coeff} u, u) \ede 2 \Re\Big(\int_{\dom} \,
  {\sum_{i,j=1}^d} a_{ij}(x) \pa_j u(x) \pa_i \overline{u(x)}\,
  dx\Big) \, = \, 2 \Re (a \nabla u, \nabla u) \\
  = \, (a \nabla u, \nabla u) + ( \nabla u, a \nabla u) \, = \, \big
  ((a+a^*) \nabla u, \nabla u \big ) \, \geq \, \gamma \| \nabla
  u\|_{L^2(\dom)}^2 \,.
\end{multline*}
If, moreover, $\pa_D \dom$ has positive measure, then there exists $c
= c_{\dom, \pa_D \dom} > 0$ such that $ \int_{\dom} |\nabla v|^2 dx
\ge c \|v\|_{H^1(\dom)}^2$ for all $v \in H_D^1(\dom)$, and hence
$P^{\coeff}$ is coercive on $V = H^1_D(\dom)$.  (Recall that
$H^1_D(\dom)$ was defined in Equation \eqref{eq.def.H1D}.  In
particular, $v = 0$ on $\pa_D \dom$ if $v \in H^1_D(\dom)$.)
\end{example}

We then have the following standard result, whose proof we recall for
the benefit of the reader. (See also \cite{CDS2011,NistorSchwab1}.)

\begin{proposition}\label{prop.pos.ell}
If $\coeff = (a_{ij}, b_{i}, c) \in {Z} $ is such that $P^{\coeff}$ is
coercive on $H^{1}_0(\dom) \subset V \subset H^{1}(\dom)$, then
$P^{\coeff}$ is strongly elliptic, more precisely, the estimate
\eqref{eq.unif.strong} is satisfied for any $C \le \rho(\coeff) :=
\rho(P^{\coeff})$. Moreover, $P^{\coeff} : V \to V^{*}$ is a
continuous bijection and $(P^{\coeff})^{-1}F$ depends analytically on
the coefficients $\coeff$ and on $F \in V^{*}$.
\end{proposition}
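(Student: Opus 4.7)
The second sentence of the proposition is immediate from the analytic Lax-Milgram lemma (Lemma~\ref{lemma.LaxMilgram}): coercivity already gives that $P^{\coeff}:V\to V^{*}$ is a continuous bijection with $\|(P^{\coeff})^{-1}\|\le\rho(\coeff)^{-1}$, and the same lemma supplies the joint analytic dependence of $(P^{\coeff})^{-1}F$ on $(\coeff,F)$. The real content is therefore to show that coercivity on $V$ forces the pointwise strong-ellipticity estimate \eqref{eq.unif.strong} with constant $C=\rho(\coeff)$.

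My plan is the standard localized-oscillatory-test-function argument. Fix $x_0\in\dom$ and $\xi\in\RR^{d}$, choose a real-valued $\phi\in\CIc(\dom)$ supported in a small ball around $x_0$, and for $\epsilon>0$ set
\begin{equation*}
 v_\epsilon(x) \ede \epsilon\,\phi(x)\,e^{i\xi\cdot x/\epsilon}.
\end{equation*}
Since $v_\epsilon$ has compact support in $\dom$, it belongs to $H^1_0(\dom)\subset V$ and can be substituted into the coercivity inequality. A direct expansion $\pa_j v_\epsilon = i\xi_j\phi\,e^{i\xi\cdot x/\epsilon}+\epsilon(\pa_j\phi)\,e^{i\xi\cdot x/\epsilon}$ shows that the oscillatory factors cancel in each product $\pa_j v_\epsilon\,\overline{\pa_i v_\epsilon}$, whence
\begin{equation*}
 \|v_\epsilon\|_{H^1(\dom)}^{2} \, = \, \|\xi\|^{2}\,\|\phi\|_{L^2}^{2}+O(\epsilon^{2}), \qquad \|v_\epsilon\|_{L^2(\dom)}^{2} \, = \, O(\epsilon^{2}).
\end{equation*}

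Inserting $v_\epsilon$ into \eqref{eq.def.B}, the leading $a_{ij}$-term limits to $\int_{\dom}\sum_{i,j}a_{ij}(x)\,\xi_i\xi_j\,|\phi(x)|^{2}\,dx$, while the $b_i$, $b_{d+i}$, and $c$ contributions are all $O(\epsilon)$ or smaller because every undifferentiated occurrence of $v_\epsilon$ carries an explicit factor of $\epsilon$. Dividing the coercivity bound $\rho(\coeff)\|v_\epsilon\|_{H^1}^{2}\le\Re B^{\coeff}(v_\epsilon,v_\epsilon)$ by $\|\phi\|_{L^2}^{2}$ and sending $\epsilon\to 0$ yields
\begin{equation*}
 \rho(\coeff)\,\|\xi\|^{2}\,\|\phi\|_{L^2}^{2} \, \le \, \Re\int_{\dom}\sum_{i,j=1}^{d}a_{ij}(x)\,\xi_i\xi_j\,|\phi(x)|^{2}\,dx .
\end{equation*}
Finally, letting $|\phi|^{2}$ shrink to a $\delta$-mass at $x_0$ and applying the Lebesgue differentiation theorem to the locally integrable function $x\mapsto\sum_{i,j}a_{ij}(x)\xi_i\xi_j$ recovers $\rho(\coeff)\|\xi\|^{2}\le\Re\sum_{i,j}a_{ij}(x_0)\xi_i\xi_j$ for almost every $x_0\in\dom$, which is \eqref{eq.unif.strong} with $C=\rho(\coeff)$.

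The main delicate point is controlling the $O(\epsilon)$ remainders in $B^{\coeff}(v_\epsilon,v_\epsilon)$ uniformly; this uses the boundedness of the $a_{ij}$ and the integrability of the lower-order coefficients, both built into membership in $Z$ via the continuity of $B^{\coeff}$ on $V\times V$. Once that bookkeeping is in place, both the oscillatory-test-function step and the Lebesgue differentiation step are routine, and the proposition follows.
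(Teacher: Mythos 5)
Your proof is correct and follows essentially the same approach as the paper's: the second part via the analytic Lax--Milgram lemma (Lemma~\ref{lemma.LaxMilgram}), the first via oscillatory test functions (the paper writes $\psi = e^{\imath t\xi\cdot x}\phi$ and lets $t\to\infty$, which is your $v_\epsilon$ with $t=1/\epsilon$ up to an irrelevant normalization factor), followed by localization in $\phi$. The only minor difference is that you invoke Lebesgue differentiation to obtain the pointwise inequality almost everywhere, whereas the paper just says ``since $\phi$ is arbitrary''; your phrasing is actually the more careful one, since $\coeff\in Z$ only guarantees boundedness of the form, not continuity of the $a_{ij}$.
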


\begin{proof} The second part is an immediate consequence of the 
analytic Lax-Milgram Lemma. Let us concentrate then on the first part.
Let us assume that $P$ is coercive and let $\xi = (\xi_i) \in \RR^d$.
Also, let us choose an arbitrary smooth function $\phi$ with compact
support $D$ in $\dom$. We then define the function $\psi \in
\maC_c^{\infty}(\dom) \subset V$ by the formula $\psi(x) := e^{\imath
  t \xi \cdot x} \phi(x) \in \CC,$ where $\imath := \sqrt{-1}$ and
$\xi \cdot x = \sum_{k=1}^d \xi_k x_k$.  Then $\pa_j \psi (x) = \imath
t \xi_j e^{\imath t \xi \cdot x} \phi(x) + e^{\imath t \xi \cdot x}
\pa_j\phi(x)$, and hence $\imath t \xi_j e^{\imath t \xi \cdot x}
\phi(x)$ is the dominant term in $\pa_j \psi (x)$ as $t \to
\infty$. Taking into account all the indices $j$ and computing the
squares of the $L^2$-norms, we obtain
\begin{equation} \label{eqn.one}
  \lim_{t \to \infty} t^{-2} \|\psi\|_{H^{1}(D)}^2 \, = \,
  \sum_{j=1}^d \xi_j^2 \int_{D} |\phi(x)|^2 \,dx \, = \, \|\xi\|^2
  \int_{D} |\phi(x)|^2 \, dx \,.
\end{equation}
Similarly, the coefficients $a_{ij}$ of $P^{\coeff}$, are estimated
using ``oscillatory testing''
\begin{equation}\label{eqn.two}
  \lim_{t \to \infty} t^{-2} (P^{\coeff} \psi, \psi) \ = \ \int_{D}
  \ \sum_{i,j=1}^d a_{ij}(x, y) \xi_i \xi_j |\phi(x)|^2 \, dx\,.
\end{equation}
We then use Definition \ref{def.coercive} for $v = \psi$ and we pass
to the limit as $t \to \infty$. By coercivity and the definition of
$\rho(\coeff) := \rho(P^{\coeff})$, we have that $\rho(\coeff)
\|\psi\|_{H^{1}(D)}^2 \le \Re (P^{\coeff}\psi, \psi)$. Dividing this
inequality by $t^{-2}$ and taking the limit as $t \to \infty$, we
obtain from Equations \eqref{eqn.one} and \eqref{eqn.two} that
\begin{equation*}
     \rho(\coeff) \|\xi\|^2 \int_{D} |\phi(x)|^2 \,dx \, \le \, \Re\,
     \int_{D} \sum_{ij} a_{ij}(x, y) \xi_i \overline\xi_j |\phi(x)|^2
     \,dx \,.
\end{equation*}
Since $\phi$ is an arbitrary compactly supported smooth function on
$D$, it follows that, for all $x\in D$,
\begin{equation*}
    \rho(\coeff) \|\xi\|^2 \, \le \, \Re \, \sum_{ij} a_{ij}(x) \xi_i
    \xi_j \:.
\end{equation*}
Since $\xi$ is arbitrary, we obtain Equation \eqref{eq.unif.strong}
with $C = \rho(P)$.
\end{proof}

An immediate corollary of Proposition \ref{prop.pos.ell} is

\begin{corollary}\label{cor.pos.elliptic}
 We have $\rho(P) \le C_{use}$.
\end{corollary}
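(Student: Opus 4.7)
The plan is to observe that this corollary is an almost immediate restatement of Proposition \ref{prop.pos.ell}. Recall that $C_{use}(\coeff)$ is defined in Definition \ref{def.use} as the \emph{largest} constant $C$ for which the pointwise inequality
\begin{equation*}
  \sum_{i,j=1}^{d} \Re \big( a_{ij}(x) \xi_i \xi_j \big) \, \ge \, C \|\xi\|^2
\end{equation*}
holds for every $\xi \in \RR^d$ and every $x \in \overline{\dom}$. Meanwhile, Proposition \ref{prop.pos.ell} already establishes that, whenever $P^{\coeff}$ is coercive on $V$, this same inequality is satisfied with the particular choice $C = \rho(\coeff) := \rho(P^{\coeff})$.

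So the argument I would write is just one line: since the value $C = \rho(P)$ satisfies the defining inequality of $C_{use}$, and $C_{use}$ is by definition the supremum of all such admissible $C$, we conclude $\rho(P) \le C_{use}$. There is no obstacle to speak of; the only small caveat is that the corollary implicitly assumes we are in the coercive setting of Proposition \ref{prop.pos.ell} (otherwise $\rho(P)$ could be non-positive and the statement becomes either vacuous or trivially true because $C_{use} \ge 0$ whenever the strong ellipticity inequality holds for some non-negative constant). In the coercive case, $\rho(P) > 0$ and the inclusion of $\rho(P)$ in the set of admissible constants in Definition \ref{def.use} is exactly the content of Proposition \ref{prop.pos.ell}.

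Thus the proof reduces to citing Proposition \ref{prop.pos.ell} and appealing to the definition of $C_{use}$ as the maximal admissible constant. No new estimate, no oscillatory testing, and no functional-analytic machinery is needed beyond what has already been developed.
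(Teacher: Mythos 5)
Your proof is correct and matches the paper's approach exactly: the paper states the corollary as an immediate consequence of Proposition \ref{prop.pos.ell} without further argument, and your one-line deduction (that $\rho(P)$ is an admissible constant in Definition \ref{def.use}, hence bounded above by the maximal one, $C_{use}$) is precisely what is intended.
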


This inequality will be used in the form $C_{use}^{-1} \le
\rho(P)^{-1}$ in the following sections.

\section{Polygonal domains, operators, and weighted Sobolev spaces}\label{sec3}

In this section, we introduce the domains, the weighted Sobolev
spaces, and the differential operators that we shall use. We also
provide several equivalent definitions of the weighted Sobolev spaces
and prove some intermediate results.

\subsection{Polygonal domains and defining local coordinates} 
In this section, we let $\dom$ be a {\em curvilinear polygonal
  domain,} although our method works without significant change for
domains with conical points.

Let us describe in detail our domain $\dom$ as a Dauge-type corner
domain, with the purpose of fixing the notation and of introducing
some useful local coordinate systems -- called ``defining
coordinates'' -- that will be used in the proofs below. Let $B_j$
denote the open unit ball in $\RR^{j}$. Thus $B_0$ is reduced to one
point, $B_1 = (-1, 1)$, and $B_2 = \{(x, y) \subset \RR^2, x^2 + y^2 <
1\}$.

\begin{definition}\label{def.polygon}
 A {\em curvilinear polygonal domain} $\dom \subset \RR^2$ is an open,
 bounded subset of $\RR^{2}$ with the property that for every point $p
 \in \overline{\dom}$ there exists $j \in \{0, 1, 2\}$, a neighborhood
 $U_p$ of $p$ in $\RR^2$, and a smooth map $\phi_p : \RR^2 \to \RR^2$
 that defines a diffeomorphism $\phi_p : U_p \to B_{j} \times B_{2-j}
 \subset \RR^{2}$, $\phi_p(p) = 0$, satisfying the following
 conditions:
 \begin{enumerate}[(i)]
  \item If $j=2$, then $U_p \subset \dom$;
  \item If $j=1$, then $\phi_p(U_p \cap \dom) = B_1 \times (0, 1)$ or
    $\phi_p(U_p \cap \dom) = B_1 \times (B_1 \smallsetminus \{0\})$;
  \item If $j =0$, then $\phi_p(U_p \cap \dom) = \{(r\cos \theta , r
    \sin \theta) | \text{ with } r \in (0, 1), \theta \in I_p\}$, for
    some finite union $I_p$ of open intervals in $S^{1}$.
 \end{enumerate}
 For $p \in \overline{\dom}$, we let $j_p$ the {\em largest} $j$ for
 which $p$ satisfies one of the above properties.
\end{definition}

These are essentially the {\em corner domains} in
\cite{daugeBook}. The definition above was generalized to arbitrary
dimensions in \cite{BMNZ}. See also \cite{KMR, liMN10, elast, NP}. The
second case in (ii) corresponds to cracks in the domain.  We continue
with some remarks.

\begin{remark} We notice that in the two cases (i) and (iii) of Definition 
\ref{def.polygon} ($j=2$ and $j=0$), the spaces $\phi_p(U_p) = B_j
\times B_{2-j}$ will be the same (up to a canonical diffeomorphism),
but the spaces $\phi_p(U_p \cap \dom)$ will not be diffeomorphic.
\end{remark}

\begin{remark}
Let $\dom$ be a curvilinear polygonal domain and $p \in
\overline{\dom}$.  Then $p$ satisfies the conditions of the definition
for {\em exactly} one value of $j$, {\em except} the case when $p$ is
on a smooth part of the boundary, when a choice of $j = 1$ or $j=0$ is
possible. This is the case exactly when $j_p = 1$.  If $j = 0$ is
chosen, then $I_p$ is half a circle.
\end{remark}

\begin{remark}
The set $\maV_g := \{p \in \overline{\dom} \vert \, j_p = 0\}$ is
finite and is contained in the boundary of $\dom$. It is the set of
{\em geometric vertices}.
\end{remark}

Let us choose for each point $p \in \overline{\dom}$ a value $j = i_p$
that satisfies the conditions of the definition. If $j_p = 1$, we
choose $i_p = j_p = 1$, except possibly for finitely many points $p
\in \overline{\dom}$.  These points will be called {\em artificial
  vertices}.  The set of all vertices (geometric and artificial) is
finite, which will be denoted by $\maV$, and {\em will be fixed in
  what follows}. We assume that all points where the boundary
conditions change are in $\maV$. We also fix the resulting polar
coordinates $r \circ \phi_p$ and $\theta \circ \phi_p$ on $U_p$, for
all $p \in \maV$.

\begin{definition}\label{def.can.chart}
 The coordinate charts $\phi_p : U_p \to B_{j} \times B_{2-j}$ of
 Definition \ref{def.polygon} that were chosen such that $j = i_p$ are
 called the {\em defining} coordinate charts of the curvilinear
 polygonal domain $\dom$.  (Recall that $j = i_p = 0$ if, and only if
 $p \in \maV$.)
\end{definition}

\begin{remark}
Artificial vertices are useful, for instance, in the case when we have
a change in boundary conditions or if there are point singularities in
the coefficients, see \cite{liMN10, liN09} and the references therein.
The right framework is, of course, that of a stratified space
\cite{BMNZ}, with $j_p$ denoting the dimension of the stratum to which
$p$ belongs, but we do not need this in the simple case at hand.
\end{remark}

\begin{remark}
It follows from Definition \ref{def.polygon} that if $\dom$ is a
curvilinear polygonal domain, then the set $\pa\dom \smallsetminus
\maV$ is the union of finitely many smooth, open curves $e_j : (-1, 1)
\to \pa \dom$. The curves $e_j$ have as image the {\em open edges} of
$\pa \dom$ and we shall sometimes identify $e_j$ with its image. The
curves $e_j$ are disjoint and have no self-intersections. The closure
of (the image of $e_j$) is called a {\em closed edge}. {Thus, the
  vertices are not contained in the open edges (but they are, of
  course, contained in the closed edges).} Our assumption that all
points where the boundary conditions change are in $\maV$ implies that
$\pa_D \dom$ consists of a union of closed edges of $\dom$.
\end{remark}

\subsection{Equivalent definitions of weighted spaces}
In this section, we discuss some equivalent definitions of weighted
Sobolev spaces. We adapt to our setting the results in \cite{sobolev},
to which we refer for more details.

We shall fix, from now on, a {\em finite} set of defining coordinate
charts $\phi_{k} = \phi_{p_k}$, for some $p_k \in \dom$, $1 \le k \le
N$, so that $U_k := U_{p_k}$, $1 \le k \le N$, defines a finite
covering of $\overline{\dom}$. Thus, for $p=p_k$ such that $j_p \neq
0$, the coordinates are $(x, y) \in \RR^{2}$. Otherwise, these
coordinates will be denoted by $(r, \theta) \in (0, 1) \times
S^{1}$. We may relabel these points such that $p_k$ is a vertex if,
and only if, $1 \le k \le N_0$. We then have the following alternative
definition of the weighted Sobolev spaces $\maK_a^m(\dom)$. We denote
\begin{equation}\label{eq.def.vf}
 X_k u \ede \pa_x u \ \ \mbox{ and } \ \ Y_k u \ede \pa_y u\,, 
 \quad \mbox{for } N_0 < k \le N\,,
\end{equation}
in the coordinate system defined by $\phi_k = \phi_{p_k} = (x, y) \in
\RR^{2}$ that corresponds to one of the chosen points $p_k$, provided
that is {\em not} a vertex.  If, however, $p_k$ is a vertex, then we
let
\begin{equation}\label{eq.def.vf2}
 X_k u \ede r \pa_r u \ \ \mbox{ and } \ \ Y_k u \ede \pa_\theta u\,,
 \quad \mbox{for } 1 \le k \le N_0\,,
\end{equation}
in the coordinate system defined by $\phi_k = (r, \theta) \in (0, 1)
\times S^{1}$.  Note the appearance of $r$ in front of $\pa_r$!

\begin{remark}\label{rem.loc.coeff}
 Assuming that the coefficients are locally Lipschitz, we can express
 the differential operator $r_{\dom}^2 p_{\coeff}$ in any of the
 coordinate systems $\phi_k : U_k \to \RR^{2}$. That means that, for
 each $1 \le k \le N$, we can find coefficients $c, c_1, c_2, c_{11},
 c_{12}, c_{22}$ such that
 \begin{equation}\label{eq.coord.syst}
   p_{\coeff}u \seq (c_{11} X_k^2 + c_{12} X_kY_k + c_{22} Y_k^2 + c_1
   X_k + c_2 Y_k + c)u \quad \mbox{ on } U_k \,,
 \end{equation}
 with the vector fields $X_k$ and $Y_k$ introduced in Equations
 \eqref{eq.def.vf} and \eqref{eq.def.vf2}.
\end{remark}

For each open subset $U \subset \dom$, let us denote 
\begin{equation}\label{eq.def.norm.U}
 \|u\|_{\maK_a^{m}(U)}^{2} \ede \sum_{|\alpha| \le m}
 \|r_{\dom}^{|\alpha|-a} \pa^{\alpha} u\|_{L^2(U)}^{2}\,.
\end{equation}
Thus, if $U = \dom$, $\|u\|_{\maK_a^{m}(U)} =
\|u\|_{\maK_a^{m}(\dom)}$ is simply the norm on
$\maK_a^{m}(\dom)$. Note that the weight $r_{\dom}$ is not intrinsic
to the set $U$, but depends on $\dom$, which is nevertheless not
indicated in the notation, in order not to overburden it.  We define
the spaces $\maW^{m, \infty}(U)$ similarly as in (\ref{eq.def.maWm})
with the same weight $r_{\dom}$.  We then have the following result
that, in particular, provides an alternative definition of the
weighted Sobolev spaces $\maK_{a}^{m}(\dom)$ introduced in Equation
\eqref{eq.def.Kondratiev}.

\begin{proposition}\label{prop.alt.def}
 Let $u : \dom \to \CC$ be a measurable function and $U \subset \dom$
 and open subset. We have that $u \in \maK_{a}^{m}(U)$ if, and only
 if, $r_{\dom}^{-a} X_k^iY_k^ju \in L^2(U\cap U_{k})$, where $U_k =
 U_{p_k}$, for all $k$ and all $i + j \le m$. Moreover, the
 $\maK_{a}^{m}(U)$--norm is equivalent to the norm
 \begin{equation*}
  ||| u |||'_{U} \ede \sum_{k=1}^{N} \sum_{i+j \le m} 
  \| r_{\dom}^{-a} X_k^{i} Y_k^{j} u \|_{L^{2}(U \cap U_k)}\,.
 \end{equation*}
\end{proposition}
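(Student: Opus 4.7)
The strategy is to localize via a partition of unity subordinate to $\{U_k\}$ and then verify the two norms are equivalent chart-by-chart. Pick smooth $\chi_k$ with $\sum \chi_k^2 = 1$ on $\overline{\dom}$ and $\operatorname{supp}\chi_k \subset U_k$. A routine Leibniz calculation shows that for measurable $u$, both quantities $\|u\|_{\maK_a^m(U)}$ and $|||u|||'_U$ are equivalent to $\sum_k \|\chi_k u\|_{\maK_a^m(U\cap U_k)}$ (respectively, the analogous localized sum with the vector fields), because multiplication by $\chi_k$ commutes with the weight $r_{\dom}$ up to bounded factors and because the derivatives of $\chi_k$ satisfy $r_\dom^{|\alpha|}\pa^\alpha \chi_k \in L^\infty$ (bounded away from $r_\dom=0$ in the non-vertex charts, and trivially bounded in vertex charts where $\chi_k$ is smooth in the polar variables). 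So it suffices to compare the two local norms on each $U_k$.

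For a non-vertex chart $k > N_0$, the set $U_k$ stays away from $\maV$, so $r_\dom$ is bounded above and below by positive constants on $\operatorname{supp}\chi_k \cap U$. The fields $X_k, Y_k$ are just $\pa_x, \pa_y$, and equivalence of the two norms is immediate (both are equivalent to the unweighted $H^m$-norm on $U \cap U_k$, up to an overall factor of $r_\dom^{-a}$). The content is in the vertex charts $k \le N_0$, where $\phi_k$ introduces polar coordinates $(r,\theta)$ near $p_k$. There $r_\dom$ is comparable to $r$ (by the defining property of $r_\dom$ near vertices), and the chain rule gives
\begin{equation*}
r\pa_x = \cos\theta\,(r\pa_r) - \sin\theta\,\pa_\theta, \qquad r\pa_y = \sin\theta\,(r\pa_r) + \cos\theta\,\pa_\theta,
\end{equation*}
together with the inverse relations $r\pa_r = x\pa_x + y\pa_y$ and $\pa_\theta = -y\pa_x + x\pa_y$ (and $|x|,|y|\le r$). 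Hence the first-order operators $\{r\pa_x, r\pa_y\}$ and $\{X_k,Y_k\} = \{r\pa_r, \pa_\theta\}$ generate the same $\maW^{0,\infty}$-module.

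The key step is to upgrade this to all orders by induction on $m$. One direction: for a multi-index $\alpha$ with $|\alpha|=m$, write $r^{|\alpha|}\pa^\alpha$ as an $m$-fold composition of first-order operators $r\pa_{x_i}$ plus commutator corrections; each commutator $[r\pa_{x_i}, r\pa_{x_j}]$ is again of the form $\sum c_{kl}\,r\pa_{x_l}$ with smooth bounded coefficients (since the relevant commutators of $r\pa_r$ and $\pa_\theta$ produce bounded linear combinations). Substituting the chain-rule formulas, one obtains $r^{|\alpha|}\pa^\alpha u$ as a finite $\maW^{0,\infty}$-linear combination of monomials $X_k^i Y_k^j u$ with $i+j\le m$. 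The reverse direction is analogous, using the inverse relations and the fact that $\pa_\theta$ commutes with $r$. The main technical care goes into checking that all the coefficients picked up in these expansions (products of $\cos\theta,\sin\theta$ and their $\theta$-derivatives, together with factors $x/r,y/r$) remain in $L^\infty$ on $\phi_k(U_k)$; this is the only nontrivial point, and it follows from the explicit form of the substitutions.

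Combining the two directions, the local seminorms
$\sum_{|\alpha|\le m}\|r_\dom^{|\alpha|-a}\pa^\alpha(\chi_k u)\|_{L^2}$
and
$\sum_{i+j\le m}\|r_\dom^{-a} X_k^i Y_k^j(\chi_k u)\|_{L^2}$
are equivalent on each $U_k$, with constants depending only on $m$, $a$, and the fixed defining charts. Summing over $k$ and undoing the partition of unity yields the claimed equivalence of $\|\cdot\|_{\maK_a^m(U)}$ with $|||\cdot|||'_U$, and in particular the membership characterization. The main obstacle, as indicated, is the bookkeeping of commutator and product terms in the induction, which is routine but must be done carefully to keep all coefficients in $\maW^{0,\infty}$.
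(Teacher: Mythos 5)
Your proposal is correct and follows essentially the same approach as the paper: the paper's proof likewise observes that away from the vertices both norms reduce to the ordinary $H^m$-norm, and near a vertex both become $\{u\,\vert\, r^{-a}(r\pa_r)^i\pa_\theta^j u \in L^2\}$, which for the Kondratiev norm is seen from the polar-coordinate identities for $r\pa_x$ and $r\pa_y$ that you also use. You simply make explicit the partition-of-unity localization and the induction on $m$ (with its commutator bookkeeping) that the paper states informally and delegates to the references.
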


\begin{proof}
 This follows right away from the definition of the
 $\maK_{a}^{m}(U)$-norm. Indeed, away from the vertices, both the
 $|||\, \cdot \,|||_{U}'$-norm and the $\maK_{a}^{m}$-norm coincide
 with the usual $H^{m}$-norm. On the other hand, near a vertex, or
 more generally on an angle $\Xi := \{(r, \theta)\vert\, \alpha <
 \theta < \beta \}$, both norms are given by $\{u\vert\, r^{-a} (r
 \pa_r)^i \pa_\theta^j u \in L^{2}(\Xi)\}$.  For the
 $\maK_{a}^{m}(U)$-norm this is seen by writing $\pa_x$ and $\pa_y$
 in polar coordinates, more precisely, from
 \begin{equation}\label{eq.polar.coord}
  r \pa_x \seq (\cos \theta) r \pa_r - (\sin \theta) \pa_\theta \quad
  \mbox{ and } \quad r \pa_y \seq (\sin \theta) r \pa_r + (\cos
  \theta) \pa_\theta\,.
 \end{equation}
 See \cite{ammannNistorWReg, liN09} for more details.
\end{proof}

We finally have the following corollary.

\begin{corollary}\label{cor.norm}
 The norm $\|u\|_{\maK_{a}^{m+1}(\dom)}$ is equivalent to the norm
 \begin{equation*}
  |||u||| \ede \|u\|_{\maK_{a}^{m}(\dom)} + \sum_{k=1}^{N} \big (
  \|X_k u\|_{\maK_{a}^{m}(U_k)} + \|Y_k u\|_{\maK_{a}^{m}(U_k)} \big
  )\,.
 \end{equation*}
\end{corollary}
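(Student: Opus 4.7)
The plan is to deduce both inequalities from Proposition \ref{prop.alt.def}, which characterizes the weighted norm locally in terms of the vector fields $X_k, Y_k$. Two preliminary facts are used throughout. First, on each chart $U_k$, the fields $X_k$ and $Y_k$ commute: immediate in a non-vertex chart where $X_k = \pa_x$, $Y_k = \pa_y$, while for a vertex chart a direct computation gives $[r\pa_r, \pa_\theta] = 0$. Second, on any nonempty overlap $U_k \cap U_\ell$, the fields $X_k, Y_k$ can be written as $\CI$-linear combinations of $X_\ell, Y_\ell$ with \emph{bounded} coefficients; the only nontrivial case is when a non-vertex chart overlaps a vertex chart, in which case the factors of $r^{-1}$ arising in passing between Cartesian and polar frames (see \eqref{eq.polar.coord}) are harmless because the overlap stays at a positive distance from the vertex, so $r_\dom$ is bounded away from zero there.

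For the bound $|||u||| \le C \|u\|_{\maK_a^{m+1}(\dom)}$, the term $\|u\|_{\maK_a^m(\dom)}$ is trivially dominated. For $\|X_k u\|_{\maK_a^m(U_k)}$ (the case of $Y_k u$ being identical), Proposition \ref{prop.alt.def} applied to $X_k u$ expresses this norm as equivalent to
\begin{equation*}
  \sum_{\ell=1}^{N} \sum_{i+j \le m} \|\, r_\dom^{-a} X_\ell^i Y_\ell^j X_k u\,\|_{L^2(U_\ell \cap U_k)}\,.
\end{equation*}
Writing $X_k$ as a bounded combination of $X_\ell, Y_\ell$ on $U_\ell \cap U_k$ and applying the Leibniz rule, each summand becomes a finite linear combination (with bounded coefficients) of quantities $\|r_\dom^{-a} X_\ell^p Y_\ell^q u\|_{L^2(U_\ell \cap U_k)}$ with $p+q \le m+1$. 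Summing these and invoking Proposition \ref{prop.alt.def} a second time gives the bound by $\|u\|_{\maK_a^{m+1}(\dom)}$.

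For the reverse bound $\|u\|_{\maK_a^{m+1}(\dom)} \le C |||u|||$, it again suffices, by Proposition \ref{prop.alt.def}, to control each local quantity $\|r_\dom^{-a} X_k^i Y_k^j u\|_{L^2(U_k)}$ for $i+j \le m+1$. When $i + j \le m$, it is absorbed into $\|u\|_{\maK_a^m(\dom)} \le |||u|||$. When $i + j = m+1$, commutativity of $X_k$ and $Y_k$ allows one derivative to be peeled off: if $i \ge 1$, then $X_k^i Y_k^j u = X_k^{i-1} Y_k^j (X_k u)$, which appears as the $\ell = k$ summand in the local expansion of $\|X_k u\|_{\maK_a^m(U_k)}$ given by Proposition \ref{prop.alt.def}; the case $i=0$, $j=m+1$ is handled analogously using $Y_k u$.

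The only point requiring genuine care is the bookkeeping of the bounded coefficients on overlaps involving vertex charts; but this is not a real obstacle, since $r_\dom$ is bounded below on every such overlap, so the inverse powers of $r_\dom$ appearing in the coordinate changes between $(\pa_x, \pa_y)$ and $(r\pa_r, \pa_\theta)$ remain bounded.
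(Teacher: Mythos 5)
Your proof is correct and rests on the same core mechanism as the paper's: apply Proposition \ref{prop.alt.def} at level $m+1$, split the local sum into terms of order $\le m$ and order exactly $m+1$, and peel one $X_k$ or $Y_k$ off the top-order terms using the commutativity of $X_k$ and $Y_k$ on each chart. Your handling of the direction $|||u||| \le C\|u\|_{\maK_a^{m+1}}$ via overlap bookkeeping and bounded changes of frame is more elaborate than necessary (it suffices to note that $X_k, Y_k : \maK_a^{m+1} \to \maK_a^m$ are continuous, which follows directly from the definitions and \eqref{eq.polar.coord}), but it is still the same argument in spirit.
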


\begin{proof}
In the definition of $|||u|||'$, Proposition \ref{prop.alt.def}, with
$m$ replaced by $m+1$, we collect all the terms with $i + j \le m$ and
notice that they give the norm for $\maK_{a}^{m}$. The rest of the
terms will contain at least one differential $X_k$ or one differential
$Y_k$ and thus are of the form $\| r_{\dom}^{-a} X_k^{i} Y_k^{j} Y_ku
\|_{L^{2}(\dom)}$ or $\| r_{\dom}^{-a} X_k^{i} Y_k^{j} X_ku
\|_{L^{2}(\dom)}$, $ i + j \le m$, since the differential operators
$X_k$ and $Y_k$ commute on $U_k$.
\end{proof}

\subsection{The differential operators}
We include in this subsection the definition of our differential
operators and three needed intermediate results (lemmas).

We introduce now our set of coefficients.  Recall the norm
$\|\coeff\|_{Z_m}$ introduced in Equation \eqref{eq.def.normalpha} and
let
\begin{equation}\label{eq.def.Zm}
 Z_m \ede \{ \, \coeff = (a_{ij}, b_{i}, c) \, \vert \ \mbox{such that
 } \|\coeff\|_{Z_m} < \infty \, \}\,.
\end{equation}

Note that for example, the Schroedinger operator $-\Delta + r^{-2}$ is
an operator of the form $P^{\coeff}$ for suitable $\coeff \in Z_m$.

Below, we shall often use inequalities of the form $A \le C B$, where
$A$ and $B$ are expressions involving $u$ and $\coeff$ and $C \in
\RR$. We shall say that $C$ is an {\em admissible bound} if it does
not depend on $u$ and $\coeff$, and then we shall write $ A \le_c B$.

\begin{lemma}\label{lemma.Cuse} 
Let $\coeff = (a_{ij}, b_{i}, c) \in Z_m$, $m \ge1$, and let us
express $p_{\coeff}$ as in Remark \ref{rem.loc.coeff}. Then $c, c_1,
c_2, c_{11}, c_{12}, c_{22} \in \maW^{m -1, \infty}(U_k)$. Moreover,
 \begin{equation*}
  \|c\|_{\maW^{m-1, \infty}(U_k)} + \|c_1\|_{\maW^{m-1, \infty}(U_k)}
  + \ldots + \|c_{22} \|_{\maW^{m-1, \infty}(U_k)} \ \le_c
  \ \|\coeff\|_{Z_m}\,.
 \end{equation*}
 If $p_{\coeff}$ is moreover uniformly strongly elliptic, then
 $|c_{22}^{-1}| \le_c C_{use}^{-1}$ on $U_k$.
\end{lemma}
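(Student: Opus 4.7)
My plan is to unfold $p_{\coeff}$ chart-by-chart, separating the interior/edge charts ($k > N_0$) from the vertex charts ($1 \le k \le N_0$), and then to isolate the coefficient of $Y_k^2$ for the last claim. I first rewrite $p_{\coeff}$ in non-divergence form,
\[
 p_{\coeff} u \seq -\sum_{i,j} a_{ij}\pa_i\pa_j u \,+\, \sum_j \Bigl(b_j - b_{d+j} - \sum_i \pa_i a_{ij}\Bigr)\pa_j u \,+\, \Bigl(c - \sum_i \pa_i b_{d+i}\Bigr) u.
\]

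On an interior/edge chart ($k > N_0$) the map $\phi_k$ identifies $U_k$ with a subset of $\RR^2$ on which $r_{\dom}$ is bounded above and below by strictly positive constants and $X_k = \pa_x$, $Y_k = \pa_y$; in particular $\maW^{s,\infty}(U_k)$ coincides with $W^{s,\infty}(U_k)$ as equivalent normed spaces. Reading off the coefficients directly from the display and using that $a_{ij},\, r_{\dom} b_i,\, r_{\dom} b_{d+i},\, r_{\dom}^2 c$ all lie in $\maW^{m,\infty}(U_k)$ with norms $\le \|\coeff\|_{Z_m}$ yields the required $\maW^{m-1,\infty}(U_k)$-bounds after at most one Cartesian differentiation. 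On a vertex chart ($1 \le k \le N_0$) the defining coordinates satisfy $r = r_{\dom}$, and \eqref{eq.polar.coord} reads $r\pa_x = \cos\theta\, X_k - \sin\theta\, Y_k$ and $r\pa_y = \sin\theta\, X_k + \cos\theta\, Y_k$. A direct commutator calculation then rewrites each $r^2\pa_i\pa_j$ as a trigonometric $\theta$-combination of $\{X_k^2, X_kY_k, Y_k^2, X_k, Y_k\}$ and each $r\pa_i$ as a trigonometric combination of $\{X_k, Y_k\}$. After multiplying the non-divergence form above by $r^2$ and substituting, each local coefficient of $X_k^{\alpha}Y_k^{\beta}$ becomes a finite sum of products of bounded smooth $\theta$-functions with quantities drawn from
\[
  a_{ij},\quad r\pa_l a_{ij},\quad r b_i,\quad r b_{d+i},\quad r\pa_l(r b_{d+i}),\quad r^2 c,
\]
all of which lie in $\maW^{m-1,\infty}(U_k)$ with norm $\le_c \|\coeff\|_{Z_m}$ by the definition of $Z_m$ and because the vector field $r\pa_l$ maps $\maW^{s,\infty}$ into $\maW^{s-1,\infty}$ (an immediate consequence of \eqref{eq.polar.coord}).

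For the final assertion, I isolate the coefficient of $Y_k^2 = \pa_\theta^2$ in the principal part $-\sum_{ij} a_{ij}(r^2\pa_i\pa_j)$. Squaring the identities \eqref{eq.polar.coord} and retaining only the $Y_k^2$-contributions gives
\[
 c_{22} \seq -\bigl( a_{11}\sin^2\theta - (a_{12}+a_{21})\sin\theta\cos\theta + a_{22}\cos^2\theta \bigr) \seq -\,\xi^{\mathrm{T}} a\, \xi,
\]
with $\xi := (\sin\theta, -\cos\theta) \in \RR^2$ a real unit vector. Since $\xi$ is real, Definition \ref{def.use} gives $\Re(-c_{22}) = \xi^{\mathrm{T}}\Re(a)\xi \ge C_{use}\|\xi\|^2 = C_{use}$, hence $|c_{22}| \ge |\Re c_{22}| \ge C_{use}$ pointwise on $U_k$, whence $|c_{22}^{-1}| \le_c C_{use}^{-1}$. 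The main obstacle I expect is the bookkeeping of $r_{\dom}$-weights when commuting Cartesian derivatives past polar vector fields: the scaling $r_{\dom},\, r_{\dom}^2$ built into $\|\coeff\|_{Z_m}$ (Equation \eqref{eq.def.normalpha}) is precisely engineered so that these weights exactly absorb the factors produced by the calculation, and once that compatibility is verified, the rest of the argument is mechanical.
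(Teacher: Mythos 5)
Your proof is correct and follows essentially the same route as the paper's: convert to non-divergence form (which is where the $m\ge 1$ hypothesis and the loss of one derivative enter), split into interior/edge charts and vertex charts, use the polar identities \eqref{eq.polar.coord} at the vertices, and finally extract the coefficient of $Y_k^2$ and invoke Definition~\ref{def.use}. Your formula $c_{22}=-\xi^{\mathrm{T}}a\,\xi$ with $\xi=(\sin\theta,-\cos\theta)$ is in fact slightly more careful than the paper's displayed expression for $c_{22}$ (which has a sign/typo inconsistency with the three $r^2\pa_i\pa_j$ expansions above it), but both arguments yield the stated bound $|c_{22}^{-1}|\le_c C_{use}^{-1}$.
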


\begin{proof} 
We first notice that since $m \ge 1$, we can convert our operator to a
non-divergence form operator. Indeed, one can simply replace a term of
the form $\pa_i a \pa_j u$ with $a \pa_i \pa_j u + (\pa_i a) \pa_j u$,
where $u \in \maK^{m+1}_{a+1}(\dom)$ and $r_{\Omega} \pa_i a \in
\maW^{m-1, \infty}(\dom)$.  We deal similarly with the terms of the
form $\pa_i (b_{i} u)$.  This accounts for the loss of one derivative
in the regularity of the coefficients of $c, \ldots , c_{22}$.

We need to show that the coefficients $c, \ldots, c_{22}$ are in
$\maW^{m-1, \infty}(\dom)(U_k)$ and that they have the indicated
bounds. To this end, we consider the two possible cases: when $U_k$
contains no vertices of $\dom$ (equivalently, if $k > N_0$) and the
case when $U_k$ is centered at a vertex.

If $k > N_0$, then the coefficients $c, \ldots, c_{22}$ can be
expressed using $\phi_k$ and its derivatives linearly in terms of the
coefficients $\coeff$ on the closure of $U_k$. Since there is a finite
number of such neighborhoods and $\phi_k$ and its derivatives are
bounded on the closure of $U_k$, the bound for the coefficients $c,
\ldots , c_{22}$ in terms of $\|\coeff\|_{Z_m}$ on $U_k$ follows using
a compactness argument.  In particular, the bound $|c_{22}^{-1}| \le_c
C_{use}^{-1}$ follows from the uniform ellipticity of $p_{\coeff}$ on
$\overline{U_k}$.

If, on the other hand, $k \le N_0$ (that is, $U_k$ is centered at a
vertex). Let us concentrate on the highest order terms, for
simplicity.  We then have, up to lower order terms (denoted $l.o.t$)
\begin{align*}
 r^{2} \pa_x^2 & = (\cos \theta)^2 (r \pa_r)^2 - 2 (\sin \theta \cos
 \theta ) r \pa_r \pa_\theta + (\sin \theta)^2 \pa_\theta^2 +
 l.o.t. \\
 r^{2} \pa_x \pa_y & = (\sin \theta \cos \theta) (r \pa_r)^2 + (\cos^2
 \theta - \sin^2 \theta) r \pa_r \pa_\theta + (\sin \theta \cos
 \theta) \pa_\theta^2 + l.o.t. \\
 r^{2} \pa_y^2 & = (\sin \theta)^2 (r \pa_r)^2 + 2 (\sin \theta \cos
 \theta ) r \pa_r \pa_\theta + (\cos \theta)^2 \pa_\theta^2 + l.o.t.
\end{align*}
The bound on the coefficients $c, \ldots, c_{22}$ follows since $\sin
\theta$ and $\cos \theta$ are in $\maW^{m, \infty}(U_k)$ for all $m$.
This gives also that $c_{22} = a_{11} \cos^2 \theta + 2 a_{12} \cos
\theta \sin \theta + a_{22} \sin^2 \theta \ge C_{use}$ for the
coefficient $c_{22}$ of $Y_k^2 = \pa_{\theta}^2$.  (Thus
$|c_{22}^{-1}| \le C_{use}^{-1}$ on $U_k$, for $k \le N_0$.)
\end{proof}

For instance, for the Laplacian in polar coordinates, we have
\begin{equation*}
 r_{\dom}^{2} \Delta \, = \, (r \pa_r)^2 + \pa_\theta^2 \, = \, X_k^2
 + Y_k^2
\end{equation*}
in the neighborhood $U_k$ of the vertex $p_k$. 

The following lemma will be used in the proof of Theorem
\ref{thm.main2} and explains some of the calculations there.

\begin{lemma}\label{lemma.prod} For two functions $b$ and $c$, 
 we have
 \begin{enumerate}[(i)]
 \item $\|bc\|_{\maK_{a}^{m}(\dom)} \le C \|b\|_{\maW^{m,
     \infty}(\dom)} \|c\|_{\maK_{a}^{m}(\dom)}$.
  \item $\|bc\|_{\maW^{m, \infty}(\dom)} \le C \|b\|_{\maW^{m,
      \infty}(\dom)} \|c\|_{\maW^{m, \infty}(\dom)}$, therefore
    $\maW^{m, \infty}(\dom)$ is an algebra.
 \item If $b\in \maW^{m, \infty}(\dom)$ is such that $b^{-1} \in
   L^{\infty}(\dom) = \maW^{0, \infty}(\dom)$, then $b$ is invertible
   in $\maW^{m, \infty}(\dom)$ and
 \begin{equation*}
  \|b^{-1}\|_{\maW^{m, \infty}(\dom)} \ \le \ C \,
  \|b^{-1}\|_{L^{\infty}(\dom)}^{m+1} \, \|b\|_{\maW^{m,
      \infty}(\dom)}^{m} \, .
 \end{equation*}
 \end{enumerate}
 The parameter $C$ depends only on $m$ and $\dom$.
\end{lemma}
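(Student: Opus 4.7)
All three parts rest on a single observation: if one expands $\pa^{\alpha}(bc)$ by the Leibniz rule, then the weight factor splits cleanly as
\[
 r_{\dom}^{|\alpha|-a} \pa^{\alpha}(bc) \, = \, \sum_{\beta \le \alpha} \binom{\alpha}{\beta} \bigl(r_{\dom}^{|\beta|} \pa^{\beta} b\bigr) \bigl(r_{\dom}^{|\alpha-\beta|-a} \pa^{\alpha-\beta} c\bigr) \, ,
\]
since $|\alpha| - a = |\beta| + (|\alpha-\beta| - a)$. The number of terms is bounded by $2^{|\alpha|} \le 2^{m}$, so the combinatorics can be absorbed into the constant $C = C(m, \dom)$.

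For part (i), I would take $L^{2}$-norms on $\dom$ of the displayed identity and estimate each summand by pulling out the $\maW^{m,\infty}$ factor in $L^{\infty}$: the first factor satisfies $\|r_{\dom}^{|\beta|}\pa^{\beta}b\|_{L^{\infty}} \le \|b\|_{\maW^{m,\infty}(\dom)}$, and the second is controlled by $\|c\|_{\maK_{a}^{m}(\dom)}$. Summing over $|\alpha| \le m$ then gives (i). Part (ii) is the same argument but with both factors estimated in $L^{\infty}$ rather than $L^{2}$.

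Part (iii) I would obtain by induction on $m$. The base case $m = 0$ is simply the given hypothesis $\|b^{-1}\|_{L^{\infty}(\dom)} < \infty$. For the inductive step, differentiate the identity $b \cdot b^{-1} = 1$ to obtain, for $|\alpha| \ge 1$,
\[
 \pa^{\alpha}(b^{-1}) \, = \, - b^{-1} \sum_{\substack{\beta \le \alpha \\ \beta \ne 0}} \binom{\alpha}{\beta} \pa^{\beta} b \, \pa^{\alpha-\beta}(b^{-1}) \, .
\]
Multiplying by $r_{\dom}^{|\alpha|}$ and distributing the weight as above, each summand on the right has the form $b^{-1} \bigl(r_{\dom}^{|\beta|}\pa^{\beta}b\bigr) \bigl(r_{\dom}^{|\alpha-\beta|}\pa^{\alpha-\beta}(b^{-1})\bigr)$ with $|\alpha - \beta| \le m - 1$. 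Taking $L^{\infty}$-norms and applying the inductive hypothesis $\|b^{-1}\|_{\maW^{m-1,\infty}(\dom)} \le C \|b^{-1}\|_{L^{\infty}(\dom)}^{m} \|b\|_{\maW^{m-1,\infty}(\dom)}^{m-1}$ yields
\[
 \|b^{-1}\|_{\maW^{m,\infty}(\dom)} \, \le \, C \, \|b^{-1}\|_{L^{\infty}(\dom)}^{m+1} \, \|b\|_{\maW^{m,\infty}(\dom)}^{m} \, ,
\]
as claimed. None of this is genuinely hard; the only point requiring a moment of care is bookkeeping the exponents in (iii), to confirm that each inductive step picks up exactly one extra factor of $\|b^{-1}\|_{L^{\infty}(\dom)}$ and one extra factor of $\|b\|_{\maW^{m,\infty}(\dom)}$, which is what the Leibniz expansion delivers.
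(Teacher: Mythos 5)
Your proof is correct and follows essentially the same route as the paper: parts (i) and (ii) use the Leibniz rule $\pa^{\alpha}(bc) = \sum_{\beta\le\alpha}\binom{\alpha}{\beta}\pa^{\beta}b\,\pa^{\alpha-\beta}c$ exactly as the paper does, with the weight $r_{\dom}^{|\alpha|-a}$ split multiplicatively across the two factors. For (iii) you run the induction directly from differentiating $b\cdot b^{-1}=1$, whereas the paper first records the closed-form identity $\pa^{\alpha}(b^{-1}) = b^{-1-|\alpha|}Q$ (with $Q$ a homogeneous polynomial of degree $|\alpha|$ in $b$ and its derivatives, proved by the same induction) and reads off the estimate; the two formulations are equivalent in substance, and your exponent bookkeeping indeed closes once one notes $\|b^{-1}\|_{L^{\infty}}\|b\|_{L^{\infty}}\ge 1$ to absorb the lower-order contributions.
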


\begin{proof}
 This is a direct calculation. Indeed, the first two relations are
 based on the rule $\pa^{\alpha}(bc) = \sum_{\beta \le \alpha}\,
 {\alpha \choose \beta}\, \pa^{\beta} b \, \pa^{\alpha - \beta}
 c$. The last one is obtained from the relation $\pa^{\alpha}(b^{-1})
 = b^{-1-|\alpha|}Q$, where $Q = Q(b, \pa_1b, \pa_2b, \ldots,
 \pa^{\alpha}b)$ is a polynomial of degree $|\alpha|$ in all
 derivatives $\pa^{\beta}b$, with $0\leq \beta \leq \alpha$.  This
 relation is proved by induction on $|\alpha|$.
\end{proof}

For further reference, we shall need the following version of
``Nirenberg's trick,'' (see, for instance, \cite{ADN59,
  TrudingerBook}).

\begin{lemma}\label{lemma.Nirenberg}
 Let $T : X \to Y$ be a continuous, bijective operator between two
 Banach spaces $X$ and $Y$. Let $S_X(t)$ and $S_Y(t)$ be two $c_0$
 semi-groups of operators on $X$, respectively $Y$, with generators
 denoted by $A_X$ and, respectively, $A_Y$.  We assume that for any $t
 > 0$, there exists $T_t \in \maL(X; Y)$ such that $S_Y(t) T = T_t
 S_X(t)$. Assume that $t^{-1}(T_t - T)$ converges strongly as $t \to
 0$ to a bounded operator $B$. Then $T$ maps bijectively the domain of
 $A_X$ to the domain of $A_Y$ and we have that $A_X T^{-1} \xi =
 T^{-1} \big ( A_Y \xi - B T^{-1} \xi \big)$, for all $\xi$ in the
 domain of $A_Y$. Consequently,
 \begin{equation*}
  \|A_X T^{-1} \xi \|_{X} \ \le \ \|T^{-1}\| \, \big ( \, \|A_Y
  \xi\|_{Y} + \|B\| \| T^{-1} \xi\|_{X} \, \big)\,.
 \end{equation*}
\end{lemma}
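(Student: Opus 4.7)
The plan is to exploit the intertwining identity $S_Y(t)T = T_t S_X(t)$ to push the semigroup $S_Y(t)$ through $T$ up to a controlled correction, and then to differentiate at $t=0$.

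First, fix $\xi \in \operatorname{dom}(A_Y)$ and set $x := T^{-1}\xi \in X$. Applying the intertwining identity to $x$ and rearranging gives
\[
T S_X(t) x \, = \, T_t S_X(t) x - (T_t - T) S_X(t) x \, = \, S_Y(t)\xi - (T_t - T) S_X(t) x,
\]
whence, after subtracting $\xi = Tx$ and applying $T^{-1}$,
\[
\frac{S_X(t) x - x}{t} \, = \, T^{-1}\!\left(\frac{S_Y(t)\xi - \xi}{t}\right) \; - \; T^{-1}\!\left(\frac{T_t - T}{t}\right) S_X(t) x.
\]

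Second, I would take $t \to 0^+$ in this identity. The first term on the right converges in $X$ to $T^{-1} A_Y \xi$ by the definition of $A_Y$ and continuity of $T^{-1}$. For the second term, I would split
\[
\frac{T_t - T}{t}\, S_X(t) x \, = \, \frac{T_t - T}{t}\,(S_X(t) x - x) + \frac{T_t - T}{t}\, x.
\]
The last piece tends to $Bx$ by hypothesis. The penultimate piece tends to zero because $\{t^{-1}(T_t-T)\}$ is strongly convergent, hence uniformly bounded in $\mathcal{L}(X;Y)$ by the Banach–Steinhaus theorem, while $S_X(t)x \to x$ in $X$ by the $c_0$ property. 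Consequently $t^{-1}(S_X(t)x - x)$ converges in $X$, which proves $x \in \operatorname{dom}(A_X)$ and
\[
A_X T^{-1}\xi \, = \, T^{-1}\!\left(A_Y \xi - B T^{-1}\xi\right).
\]
The stated norm bound is then an immediate application of the triangle inequality.

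Third, for the bijectivity assertion I would run the symmetric argument. Given $x \in \operatorname{dom}(A_X)$, write
\[
\frac{S_Y(t) Tx - Tx}{t} \, = \, T_t \, \frac{S_X(t) x - x}{t} + \frac{T_t - T}{t}\, x.
\]
Since $T_t \to T$ strongly (as $t^{-1}(T_t - T)$ converges, in particular $T_t - T \to 0$ strongly) and is uniformly bounded near $0$, the first term on the right tends to $T A_X x$, while the second tends to $Bx$. Thus $Tx \in \operatorname{dom}(A_Y)$, so $T$ maps $\operatorname{dom}(A_X)$ into $\operatorname{dom}(A_Y)$, completing the bijection.

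The only delicate point is the joint limit $t^{-1}(T_t - T) S_X(t) x \to Bx$: one is mixing strong (not uniform) convergence of $t^{-1}(T_t - T)$ with convergence $S_X(t)x \to x$, so the Banach–Steinhaus uniform bound is essential. All remaining steps are algebraic rearrangement and routine limits.
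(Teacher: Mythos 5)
Your proof is correct and follows essentially the same route as the paper's: start from the intertwining relation $S_Y(t)T = T_t S_X(t)$, rearrange it into an identity for difference quotients, and let $t\to 0$. Where you go beyond the paper is in making explicit, via Banach--Steinhaus, the uniform bound on $t^{-1}(T_t-T)$ near $t=0$ that is needed to pass to the limit in $t^{-1}(T_t-T)S_X(t)x$; the paper glosses over this when it asserts that one difference quotient converges if and only if the other does. You also argue the two inclusions $T(\operatorname{dom}A_X)\subset\operatorname{dom}A_Y$ and $T^{-1}(\operatorname{dom}A_Y)\subset\operatorname{dom}A_X$ separately, which is cleaner. One small precision: in the second step Banach--Steinhaus gives uniform boundedness of $\{t^{-1}(T_t-T)\}$ only for $t$ ranging near $0$ (as you correctly say in the third step), since pointwise boundedness is only guaranteed there by strong convergence; this is all you use, so the argument is sound.
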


\begin{proof}
 We have that $\xi \in X$ is in the domain of $A_X$, the generator of
 $S_X$ if, and only if, the limit $A_X \xi := \lim_{t \to 0}
 t^{-1}\big ( S_X(t) - 1) \xi$ exists. The definition of $T_t$ gives
 \begin{equation*}
  t^{-1}\big ( S_Y(t) -1) T \xi \ = \ t^{-1}\big (T_t - T)S_X(t) \xi
  \, + \, t^{-1} T \big ( S_X(t) - 1)\xi\,.
 \end{equation*}
 Since $t^{-1}(T_t - T)\zeta \to B \zeta$ for all vectors $\zeta \in
 X$ and $B : X \to Y$ is bounded, we obtain that the limit $\lim_{t
   \to 0}t^{-1}\big ( S_Y(t) -1) T \xi$ exists if, and only if, the
 limit $\lim_{t \to 0}t^{-1}\big ( S_X(t) -1) \xi$ exists.  This shows
 that $T$ maps bijectively the domain of $A_X$ to the domain of $A_Y$
 and that $A_Y T = B + T A_X$. Multiplying by $T^{-1}$ to the left and
 to the right gives the desired result.
\end{proof}

One can use Lemma \ref{lemma.Nirenberg} as a regularity estimate.

\section{Higher regularity in weighted Sobolev spaces}\label{sec4}

In this section, we prove our main result, Theorem \ref{thm.main2}.
Theorem \ref{thm.main} is an immediate consequence of this theorem and
of Remark \ref{rem.a.neq.0}.

\subsection{The higher regularity problem}
We now come back to the study of our mixed problem, as formulated in
Equation \eqref{eq.mixed}.  We are interested in solutions with more
regularity than the ones provided by the space $V$ appearing in its
weak formulation, Equation \eqref{eq.weak} or Equation
\eqref{eq.problem}. While for the weak formulation the classical
Sobolev spaces suffice, the higher regularity is formulated in the
framework of the weighted Sobolev spaces considered by Kondratiev
\cite{Kondratiev67} and others, see also \cite{CostabelStephan85, dauge1982}.

We thus introduce 
\begin{equation}\label{eq.def.Vm-1}
\begin{gathered}
 \Vm(a) \ede \maK_{a+1}^{m+1}(\dom) \, {\cap \, \{u\vert_{\pa_D \dom}
   = 0 \} } \ \mbox{ for } \ \ m \in\ZZ_{+} = \{ 0, 1, 2, \ldots \}
 \ \ \mbox{ and}\\
 \Vmm(a) \ede \maK_{a-1}^{m-1}(\dom) \oplus \maK_{a-1/2}^{m-1/2}(\pa_N
 \dom) \ \ \mbox{ for } \ \ m \in \NN = \{ 1, 2, \ldots \}. \ \ \\
\end{gathered}
\end{equation}
The spaces $\maK_{a-1/2}^{m-1/2}(\pa_N \dom)$, $m \ge 1$, are the
spaces of traces of functions in $\maK_{a}^{m}(\dom)$, in the sense
that the restriction at the boundary defines a continuous, surjective
map $ \maK_{a}^{m}(\dom) \to \maK_{a-1/2}^{m-1/2}(\pa_N \dom)$
\cite{ammannNistorWReg}.  The space $\maK_{a}^{m}(\pa_N \dom)$ can be
defined directly for $m \in \ZZ_+$ in a manner completely analogous to
the usual Kondratiev spaces. For non-integer regularity, they can be
obtained by interpolation, \cite{sobolev, ammannNistorWReg}.

We recall that differentiation defines continuous maps $\pa_j :
\maK_{a}^{m}(\dom) \to \maK_{a-1}^{m-1}(\dom)$. In the same way, the
combination of the normal derivative at the boundary
$(\pa^{\coeff}_{\nu}v) := \sum_{i=1}^{d} \nu_i ( \sum_{j=1}^{d}a_{ij}
\pa_j v + b_{d+i} v \big)$ and restriction at the boundary define a
continuous, surjective map $\pa^{\coeff}_\nu : \maK_{a}^{m}( \dom) \to
\maK_{a-3/2}^{m-3/2}(\pa_N \dom)$, $m \ge 2$.

\begin{lemma}\label{lemma.cont3} We have continuous maps
\begin{equation}\label{eq.def.Pma}
\begin{gathered}
 P^{\coeff}(m, a) \ede (p_{\coeff}, \pa^{\coeff}_\nu) \, : \, \Vm(a)
 \, \to \, \Vmm(a)\,, \quad m \ge 1\,,\\
 P^{\coeff}(m, a)(u) \seq \Big ( \, \sum_{ij} \pa_i ( a_{ij} \pa_j u)
 + \sum_{i} b_i \pa_i u + cu, \sum_{ij} \nu_i a_{ij} \pa_j
 u\vert_{\pa_N \dom} \, \Big )\,.
\end{gathered}
\end{equation}
\end{lemma}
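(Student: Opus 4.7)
\smallskip

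\noindent\textbf{Proof plan.} The statement has two parts: continuity of the interior operator $p_{\coeff}: \Vm(a)\to\maK_{a-1}^{m-1}(\dom)$ and continuity of the boundary operator $\pa_\nu^{\coeff}: \Vm(a)\to\maK_{a-1/2}^{m-1/2}(\pa_N\dom)$. The plan is to treat each of the four types of terms in $p_{\coeff}$ (namely $\pa_i(a_{ij}\pa_j u)$, $b_i\pa_i u$, $\pa_i(b_{d+i}u)$, and $cu$) separately, in each case balancing the powers of $r_{\dom}$ that weight the coefficient inside $\|\coeff\|_{Z_m}$ against matching powers of $r_{\dom}^{-1}$ absorbed into $u$.

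\smallskip

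\noindent The three inputs I need are: (a) differentiation $\pa_i:\maK_a^m(\dom)\to\maK_{a-1}^{m-1}(\dom)$ is continuous; (b) the trace map $\maK_a^m(\dom)\to\maK_{a-1/2}^{m-1/2}(\pa_N\dom)$ is continuous for $m\ge 1$; and (c) the scaling property that for any $s\in\RR$, multiplication by $r_{\dom}^s$ maps $\maK_a^m(\dom)$ continuously into $\maK_{a+s}^m(\dom)$ (this follows from the Leibniz rule and the fact that $r_{\dom}^{|\alpha|}\pa^\alpha r_{\dom}^s$ is bounded near the vertices, together with smoothness of $r_{\dom}$ away from them). Combined with Lemma \ref{lemma.prod}(i), these give the scheme: if $f\in\maW^{k,\infty}(\dom)$ and $r_{\dom}^{-s}u\in\maK_{a-s}^{k}(\dom)$, then $r_{\dom}^s f\cdot r_{\dom}^{-s} u = fu\in\maK_{a-s}^k(\dom)$.

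\smallskip

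\noindent Applied to the four terms, with $u\in\maK_{a+1}^{m+1}(\dom)$: for the principal part, $\pa_j u\in\maK_a^m(\dom)$, so $a_{ij}\pa_j u\in\maK_a^m(\dom)$ by Lemma \ref{lemma.prod}(i), hence $\pa_i(a_{ij}\pa_j u)\in\maK_{a-1}^{m-1}(\dom)$; for the first lower-order term, $r_{\dom}b_i\in\maW^{m,\infty}(\dom)$ multiplied by $\pa_i u\in\maK_a^m(\dom)\subset\maK_a^{m-1}(\dom)$ gives $r_{\dom}b_i\pa_i u\in\maK_a^{m-1}$, i.e.\ $b_i\pa_i u\in\maK_{a-1}^{m-1}(\dom)$; for the second, $r_{\dom}^{-1}u\in\maK_a^{m+1}(\dom)$ by scaling, so $(r_{\dom}b_{d+i})(r_{\dom}^{-1}u)=b_{d+i}u\in\maK_a^m(\dom)$, and then $\pa_i(b_{d+i}u)\in\maK_{a-1}^{m-1}(\dom)$; for the zeroth-order term, $r_{\dom}^{-2}u\in\maK_{a-1}^{m+1}(\dom)\subset\maK_{a-1}^{m-1}(\dom)$, and $(r_{\dom}^2 c)(r_{\dom}^{-2}u)=cu\in\maK_{a-1}^{m-1}(\dom)$ by Lemma \ref{lemma.prod}(i). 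Each of these steps yields a bound linear in $\|\coeff\|_{Z_m}$ and in $\|u\|_{\maK_{a+1}^{m+1}(\dom)}$, so summing proves continuity of $p_{\coeff}$.

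\smallskip

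\noindent For the boundary part, the same bookkeeping applies: $a_{ij}\pa_j u\in\maK_a^m(\dom)$ and $b_{d+i}u\in\maK_a^m(\dom)$ were obtained above, so the trace map (b) sends $\sum_i\nu_i(\sum_j a_{ij}\pa_j u + b_{d+i}u)|_{\pa_N\dom}$ into $\maK_{a-1/2}^{m-1/2}(\pa_N\dom)$, with norm bounded by a constant times $\|\coeff\|_{Z_m}\|u\|_{\maK_{a+1}^{m+1}(\dom)}$. The mildly subtle point, and the only real thing to check beyond routine bookkeeping, is the scaling property (c) near the vertices; that is essentially the only place where the structure of the Kondratiev norm is used in an essential way. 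Everything else is an application of the product lemma and the continuity of $\pa_i$ and the trace on the Kondratiev scale.
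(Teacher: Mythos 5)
Your proof is correct and is essentially the route the paper implicitly takes: the paper does not print an explicit proof of Lemma \ref{lemma.cont3}, but in the paragraph immediately preceding it records precisely the ingredients you use, namely the continuity of $\pa_j:\maK_a^m(\dom)\to\maK_{a-1}^{m-1}(\dom)$ and the continuity (and surjectivity) of the trace and conormal derivative on the Kondratiev scale, leaving the Leibniz-rule bookkeeping with $\|\coeff\|_{Z_m}$ to the reader via Lemma \ref{lemma.prod}. Your term-by-term balancing of powers of $r_\dom$ against the weights $a_{ij}$, $r_\dom b_i$, $r_\dom^2 c$ is the right way to fill in that bookkeeping, and your scaling observation (c) that $r_\dom^s:\maK_a^m(\dom)\to\maK_{a+s}^m(\dom)$ is an isomorphism is the standard fact needed to reconcile the definition \eqref{eq.def.normalpha} with the claimed mapping properties.

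One small point worth making explicit: in the boundary term you need the factor $\nu_i$ to be a bounded multiplier on $\maK_{a-1/2}^{m-1/2}(\pa_N\dom)$; since $\pa_N\dom$ is a union of closed edges and the boundary is smooth on each open edge, $\nu$ is smooth away from the vertices (hence in $\maW^{m',\infty}$ on each edge for any $m'$), so this is fine — but it is worth stating, since the paper's displayed formula for $\pa_\nu^\coeff$ already folds $\nu_i$ into the map it asserts to be continuous, whereas you rebuild the map from pieces. Also note that the paper records $\pa_\nu^\coeff:\maK_a^m(\dom)\to\maK_{a-3/2}^{m-3/2}(\pa_N\dom)$ for $m\ge 2$; applied with the source $\maK_{a+1}^{m+1}$ that is the condition $m\ge 1$ in the lemma, consistent with the fact that you need $\pa_j u\in\maK_a^m$ with $m\ge 1$ for the trace to make sense.
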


Therefore the operators $P^{\coeff}(m, a)$, $m \in \NN$, $a \in \RR$,
are given by the same formula (but have different domains and ranges).

\begin{remark}
Let us assume for this remark that $a = 0$ and discuss this case in
more detail. If $\pa_N \dom$ contains no adjacent edges, the the Hardy
inequality \cite{BNZ3D1,KMR} shows that the natural inclusion
\begin{equation}\label{eq.inclusion}
  \maK_1^1(\dom) \cap \{ u\vert_{\pa_D \dom} = 0 \} \, \to \,
  H^1_D(\dom) \ede H^1(\dom) \cap \{ u\vert_{\pa_D \dom} = 0 \}
\end{equation}
is an isomorphism (that is, it is continuous with continuous inverse).
We thus consider $V := V_0(0)$ in general (for all $\pa_N \dom$).  For
symmetry, we also let $V_0^{-}(0) := V^{*}$ and
\begin{equation}\label{eq.def.Pma0}
 P^{\coeff}(0, 0) \ede P^{\coeff} \, : \, V_0(0) \seq V \to
 V_{0}^{-}(0) \ede V^{*}\,,
\end{equation}
which is, of course, nothing but the operator studied before.

We then have 
\begin{equation*}
 V_{m+1}(0) \subset \Vm(0) \quad \mbox{and} \quad V_{m+1}^{-}(0)
 \subset \Vmm(0)\, \quad \mbox{for all } m \ge 0\,.
\end{equation*}
This is trivially true for $m >0$. For $m=0$, in which case we need to
construct the natural inclusion $\Phi : V^{-}_{m}(0) \to
V_{0}^{-}(0)$, $m\geq 1$. The map $\Phi$ associates to $(f, h) \in
V^{-}_{m}(0) := \maK_{-1}^{m-1}(\dom) \oplus \maK_{-1/2}^{m-1/2}(\pa_N
\dom)$ the linear functional $F:=\Phi(f, h)$ on $V$, $F \in V^{*}$
defined by the formula
\begin{equation}
 F(v) \, = \, \Phi(f, h)(v) \ede \int_{\dom} f \, v\, dx + \int_{\pa_N
   \dom} h \, v \, dS \,,
\end{equation}
where $dx$ is the volume element on $\dom$ and $dS$ is the surface
element on $\pa \dom$. With this definition of the inclusion $\Phi :
V^{-}_{m}(0) \to V_0^{-}(0) := V^*$, we obtain that $P^{\coeff}(m, 0)$
is the restriction of $P^{\coeff}(0, 0)$ to $\Vm(0)$. In other words,
we have the commutative diagram
\begin{equation}\label{restriction}
\begin{CD}
\Vm(0) @>{P^{\coeff}(m, 0)}>> \Vmm(0) \\
@VVV @VVV \\
V_0(0) := V @>{P^{\coeff}(0, 0) := P^{\coeff}}  >> V^{-}_{0}(0)
\end{CD}
\end{equation}
with the operators $P^{\coeff}$ introduced in Equations 
\eqref{eq.def.Pma} and \eqref{eq.def.Pma0}.
\end{remark}

See also Remark \ref{rem.weak}. We now return to the general case $a
\in \RR$.

\begin{remark}\label{rem.a.neq.0}
We then notice that we have
\begin{equation*}
  \Vm(a) \seq r_{\dom}^{a} \Vm(0) \ \ \mbox{ for } \ \ m \ge 0
  \ \ \mbox{ and }\ \ \Vmm(a) \seq r_{\dom}^{a} V_m^{-}(0)\ \ \mbox{
    for } m > 0\,.
\end{equation*}  
We then let
\begin{equation*}
 V_{0}^{-}(a) \ede r_{\dom}^{a} V_0^{-}(0) \seq r_{\dom}^{a} V^{*}\,.
\end{equation*}
By symmetry, we obtain
\begin{equation}\label{eq.inclusions}
 V_{m+1}(a) \subset \Vm(a) \quad \mbox{and} \quad V_{m+1}^{-}(a)
 \subset \Vmm(a)\, \quad \mbox{for all } m \ge 0\,,
\end{equation}
in general (for all $a$). In fact, the relation between the spaces
above for different values of $a$ allows us to reduce to the case
$a=0$ since, if $\coeff \in Z_m$, then there exists $\coeff(a) \in
Z_m$ such that
\begin{equation}\label{eq.rel.Ps}
 P^{\coeff}(m, a) \seq r^{a}_{\dom} P^{\coeff(a)}(m, 0) r^{-a}_{\dom}
 \,, \quad m \ge 1\,.
\end{equation}
This can be seen from $r^{a} (\pa_j) r^{-a} u = \pa_j u - a x_jr^{-1}
u$ and $x_j/r \in \maW^{m, \infty}(\dom)$ for all $m$.  In particular,
$\coeff(a) = \coeff + a \gamma_1 + a^{2} \gamma_2$, with $\gamma_1,
\gamma_2 \in Z_m$, whenever $\coeff \in Z_m$.  (This explains why it
is crucial to consider coefficients in weighted spaces of the form
$\maW^{m, \infty}(\dom)$ as well as in terms of the form $\pa_i (b_i
u)$ in the definition of $p_{\coeff}$.) We use Equation
\eqref{eq.rel.Ps} to define $P^{\coeff}(0, a)$ for all $a$.  Of
course, $P^{\coeff}(0, 0) = P^{\coeff} : V \to V^{*}$.
\end{remark}

Our {\em higher regularity problem} is then to establish conditions
for $P^{\coeff}(m, a)$ to be an isomorphism, which is achieved in
Theorem \ref{thm.main2}.

\subsection{Extension of Theorem \ref{thm.main} and its proof}

For its proof, it will be convenient to extend the differential
operators $X_k$, $Y_k$ from $U_k$ to the whole domain $\dom$. We
choose these extensions so that
\begin{enumerate}[(i)]
 \item If $p_k$ is a vertex, then all $X_j$, $Y_j$, $j \neq k$, vanish
   in a neighborhood of $p_k$.
 \item For all $k$, $X_k$ (regarded as a vector field) is tangent to
   all edges (if $X_k$ vanishes at a point on an edge, it is
   considered to be tangent to the edge at that point).
\end{enumerate}

Recall that $\rho(P) := \inf_{v \neq 0}\, \Re (Pv,
v)/\|v\|_{V}$, for any linear map $P : V \to V^{*}$, that
$\rho(\coeff) := \rho(P^{\coeff})$, and that $C_{use}^{-1} \le
\rho(\coeff)^{-1}$.

\begin{theorem}\label{thm.main2} 
 Let $\dom \subset \RR^2$ be a bounded, curvilinear polygonal domain
 and $\coeff = (a_{ij}, b_{i}, c) \in Z_m$. If $P^{\coeff(a)} : V \to
 V^{*}$ is {coercive}, then $P^{\coeff}(m, a) : \Vm(a) \to \Vmm(a)$ is
 invertible for all $m \ge 0$.  Moreover, $P^{\coeff}(m, a)^{-1}$
 depends analytically on the coefficients $\coeff$ and there exists
 $C_m = C_m(\dom, \pa_D \dom)$ and $N_m \ge 0$ such that
 \begin{equation*}
  \|P^{\coeff}(m, a)^{-1} \|_{\maL(\Vmm; \Vm)} \ 
  \le  \ C_m \left (\rho(\coeff(a))\right )^{-N_m-1} \, 
  \|\coeff(a)\|_{Z_m}^{N_m} \,.
 \end{equation*}
\end{theorem}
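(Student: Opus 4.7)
\emph{Reduction to $a=0$.} I would first use Remark \ref{rem.a.neq.0}: the relation $P^{\coeff}(m,a) = r_\dom^{a} P^{\coeff(a)}(m,0) r_\dom^{-a}$ with $\coeff(a) = \coeff + a\gamma_1 + a^2\gamma_2 \in Z_m$ intertwines $P^{\coeff}(m,a)$ acting on $\Vm(a)$ with $P^{\coeff(a)}(m,0)$ acting on $\Vm(0)$. Coercivity is stable under the small linear perturbation by $a\gamma_1 + a^2\gamma_2$ by Remark \ref{rem.LaxMilgram}, which produces a lower bound of the form $\rho(\coeff(a)) \ge \rho(\coeff) - \gamma_1|a| - \gamma_2 a^2$; this accounts exactly for the denominator in the target estimate. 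Thus it suffices to prove the theorem for $a=0$. The case $m=0$ is immediate from the analytic Lax--Milgram Lemma \ref{lemma.LaxMilgram}: coercivity of $P^{\coeff} : V \to V^{*}$ gives invertibility with $\|P^{\coeff}(0,0)^{-1}\| \le \rho(\coeff)^{-1}$ and analytic dependence on $\coeff$; i.e., $N_0 = 0$ and $C_0 = 1$.

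\emph{Induction on $m$.} Assuming the conclusion for $m-1$, I would prove it for $m$. Given $F \in \Vmm(0)$, the base case produces a unique $u \in V$ with $P^{\coeff}u = F$; it remains to show $u \in \Vm(0)$ with the desired norm estimate. By Corollary \ref{cor.norm}, this reduces to controlling $\|X_k u\|_{\maK_0^{m-1}(U_k)}$ and $\|Y_k u\|_{\maK_0^{m-1}(U_k)}$ in every chart $U_k$. On each chart I would argue as follows. \emph{(i)} Away from the vertices and the boundary, both $X_k$ and $Y_k$ are ordinary translations, and Nirenberg's Lemma \ref{lemma.Nirenberg} applies directly using the translation semigroups: the conjugated operator has coefficients differing from $\coeff$ by a bounded operator with norm $\lesssim \|\coeff\|_{Z_m}$, so the inductive hypothesis applied at level $m-1$ yields $X_k u, Y_k u \in V_{m-2}(0)$ on $U_k$. \emph{(ii)} Near a smooth boundary point the tangential field is again handled by Nirenberg (the tangential translation preserves the vanishing on $\pa_D\dom$); the normal second-order term is then recovered algebraically from the equation by inverting the leading coefficient (invertibility furnished by Lemma \ref{lemma.Cuse}, estimated in $\maW^{m-1,\infty}$ through Lemma \ref{lemma.prod}(iii)). \emph{(iii)} At a vertex with $X_k = r\pa_r$, $Y_k = \pa_\theta$, I replace ordinary translations by \emph{angular rotation} and \emph{radial dilation} semigroups, both of which preserve the Kondratiev weight structure and the boundary conditions on radial edges; their generators are precisely $Y_k$ and $X_k$, so Nirenberg's lemma delivers $Y_k u$ (via rotations) and, using the equation together with $c_{22}^{-1}$, the remaining second-order radial combination. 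This is the ``divided differences'' method advertised in novelty (iii).

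\emph{Norm bookkeeping and analyticity.} Each invocation of the inductive hypothesis introduces a factor of $\rho(\coeff)^{-N_{m-1}-1}\|\coeff\|_{Z_{m-1}}^{N_{m-1}}$; each algebraic inversion of $c_{22}$ brings a factor $C_{use}^{-(m-1)-1}\|\coeff\|_{Z_m}^{m-1}$ by Lemma \ref{lemma.prod}(iii), which is dominated by $\rho(\coeff)^{-m}\|\coeff\|_{Z_m}^{m-1}$ via Corollary \ref{cor.pos.elliptic}; products with coefficients via Lemma \ref{lemma.prod}(i)--(ii) are controlled by $\|\coeff\|_{Z_m}$. Summing the finitely many chart contributions using Corollary \ref{cor.norm} yields a bound of the announced shape $C_m \rho(\coeff)^{-N_m-1}\|\coeff\|_{Z_m}^{N_m}$ for suitable $N_m$. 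Analyticity in $\coeff$ follows because every ingredient (Lax--Milgram inversion, multiplication by coefficients, inversion of $c_{22}$, and the Nirenberg/semigroup construction) is analytic in $\coeff$ by Lemma \ref{lemma.analytic} and Corollary \ref{cor.cont}, and analyticity is preserved by composition.

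\emph{Main obstacle.} The hardest point, in my view, will be making the dilation-based Nirenberg argument at a vertex fully rigorous: verifying that the dilation semigroup $S_r(t)u(r,\theta) := u(e^t r,\theta)$ acts uniformly on $\Vm(0)\cap U_k$ and on $\Vmm(0)\cap U_k$, that the conjugation $\coeff \mapsto \coeff \circ S_r(t)^{-1}$ stays in $Z_m$ with $\|\cdot\|_{Z_m}$-norm independent of $t$ (this uses crucially that coefficients live in $\maW^{m,\infty}$ with the factors $r_\dom, r_\dom^{2}$ built into the norm $\|\coeff\|_{Z_m}$), and that the resulting difference quotient has a bounded limit $B$ in Lemma \ref{lemma.Nirenberg}. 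The closely related bookkeeping of powers of $\rho$ and $\|\coeff\|_{Z_m}$ through the induction, so that the final $N_m$ depends only on $m$, is delicate but mechanical once the semigroup machinery is in place.
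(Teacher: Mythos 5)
Your reduction to $a=0$ via Remark \ref{rem.a.neq.0}, the base case via the Lax--Milgram Lemma \ref{lemma.LaxMilgram}, the induction via Corollary \ref{cor.norm} and Lemma \ref{lemma.Nirenberg}, and the recursive norm bookkeeping all track the paper's proof. But there is a genuine gap in your step (iii), together with a related structural mismatch.

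The claim that the angular rotation semigroup ``preserve[s]\ldots the boundary conditions on radial edges'' is false. Rotation $u(r,\theta)\mapsto u(r,\theta-t)$ maps the sector $\{0<r<1,\ \alpha<\theta<\beta\}$ to the \emph{different} sector $\{0<r<1,\ \alpha+t<\theta<\beta+t\}$; it moves the radial edges, hence does not define a $c_0$ semigroup on $V_{m-1}(0)$ or $V_{m-1}^{-}(0)$, and Lemma \ref{lemma.Nirenberg} cannot be invoked for $Y_k=\pa_\theta$ at a vertex. (Rotation preserves the weight $r$, but that is beside the point.) Only the radial dilation generated by $X_k=r\pa_r$ preserves the sector and the boundary conditions there.

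The paper sidesteps this by never applying Nirenberg's lemma to $Y_k$ in \emph{any} chart. The extensions of the $X_k$ to all of $\dom$ are chosen to be tangent to every edge, so the $X_k$-flow preserves $\dom$ and the boundary conditions uniformly — at a vertex this is exactly the dilation $r\pa_r$; Nirenberg is applied to $X_k$ only. The $Y_k$-derivatives are \emph{always} recovered from the equation: Lemma \ref{lemma.Cuse} gives $c_k Y_k^2 u = r_\dom^2 p_{\coeff} u - Q_k u$, with $\|c_k^{-1}\|_{\maW^{m-1,\infty}(U_k)}$ controlled via Lemma \ref{lemma.prod}(iii) and Corollary \ref{cor.pos.elliptic}, and $Q_k$ built from $X_k^2$, $X_kY_k$, and lower order; the mixed term $Y_kX_ku=X_kY_ku$ on $U_k$ is handled by the $X_k$-estimate already in hand. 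Your step (i), applying Nirenberg to $Y_k$ in the interior via translation, has the same defect in milder form — the extension of $Y_k$ to $\dom$ need not be tangent to $\pa\dom$, so its flow need not preserve $\dom$ (and translation of the global domain certainly does not). Replacing your tripartite split (i)--(iii) by the paper's uniform scheme — Nirenberg for the globally tangent $X_k$, the equation plus ellipticity for $Y_k^2$ — closes the gap; the rest of your plan is sound.
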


\begin{proof}
In view of Remark \ref{rem.a.neq.0} and of the relation in Equation
\eqref{eq.rel.Ps}, we can reduce the proof of this theorem to the case
$a=0$.  Because of this, {\em we shall assume for the rest of this
  section that $a = 0$ and we shall write $\Vm(0) = \Vm$ and $\Vmm(0)
  = \Vmm$.}  We also denote $\|(P^{\coeff})^{-1}\|_{m} :=
\|(P^{\coeff})^{-1} \|_{\maL(\Vmm; \Vm)}$.

 For $m = 0$, we can just take $C_0 = 1$ and $N_0 = 0$ and then the
 result reduces to the Lax-Milgram Lemma \ref{lemma.LaxMilgram}.  In
 general, we adapt to our setting the classical method based on finite
 differences (see for example \cite{TrudingerBook, LionsMagenes1,
   CDNBook}), which was used in similar settings in \cite{BNZ3D1,
   BNZ3D2, HNParam, NistorSchwab1}. We thus give a summary of the
 argument. For simplicity, we drop $\dom$ from the notations of the
 norms. In this proof, as throughout the paper, $C$ is a parameter
 that is independent of $\coeff$ or $F$, and hence it depends only on
 $\dom$, $\pa_N \dom$, $m$, and the choice of the vector fields $X_k$
 and $Y_k$ (and of their initial domains $U_k$). However, we write $A
 \le_c B$ instead of $A \le CB$, if $C$ is such a bound.

 Let us notice that since $\|P^{\coeff}(m, 0)\| \|P^{\coeff}(m,
 0)^{-1}\| \ge \|P^{\coeff}(m, 0)P^{\coeff}(m, 0)^{-1}\| \ge 1$ and
 since $\|P^{\coeff}(m, 0) \|_{m} \le_c \|\coeff\|_{Z_m}$ we have that
 \begin{equation*}
  \|\coeff\|_{Z_m} \|(P^{\coeff})^{-1}\|_{m} \ge 1/C > 0 \,. 
 \end{equation*}
 When $m=0$, we also have $\rho(\coeff)^{-1} \ge \|(P^{\coeff})^{-1}\|
 =:\|(P^{\coeff})^{-1}\|_{0}$, and hence
 \begin{equation}\label{eq.beta.P}
  R(\coeff) \ede \|\coeff\|_{Z_m} \rho(\coeff)^{-1} \, 
  \ge \, \|\coeff\|_{W^{0, \infty}} \|(P^{\coeff})^{-1}\|_{0} \ge 1/C > 0 \,. 
 \end{equation}
 
 To show that the operator $P^{\coeff}(m, 0) : \Vm(0) \to \Vmm(0)$ is
 invertible and to obtain estimates on $\|(P^{\coeff})^{-1}\|_{m} :=
 \|P^{\coeff}(m, 0)\|$, we proceed by induction on $m$. As we have
 explained above, for $m = 0$, this has already been proved. We thus
 assume that $P^{\coeff}(m-1, 0)$ is invertible and that it satisfies
 the required estimate, which we write as
 \begin{equation*}
  \|(P^{\coeff})^{-1}\|_{m-1} \ede \|P^{\coeff}(m-1, 0)^{-1}
  \|_{\maL(\Vmm; \Vm)} \ \le \ C_{m-1}
  \frac{R(\coeff)^{N_{m-1}}}{\rho(\coeff)} \,.
 \end{equation*}

 Let $F \in \Vmm$ be arbitrary but fixed. We know by the induction
 hypothesis that $u:= (P^{\coeff})^{-1}F = P^{\coeff}(m-1, 0)^{-1}F
 \in V_{m-1}$, but we need to show that it is in fact in $\Vm $ and to
 estimate its norm in terms of $\|F\|_{\Vmm}$.  Since $\Vm :=
 \maK_{1}^{m+1} \cap V$, it is enough to show that $u \in
 \maK_{1}^{m+1}$ and to estimate $\|u\|_{\maK^{m+1}_{1}} =
 \|(P^{\coeff})^{-1}F\|_{\maK^{m+1}_{1}}$.
 
 First of all, by Corollary \ref{cor.norm}, it is enough to estimate
 $\|X_k u\|_{\maK_1^{m} }$ and $\|Y_k u\|_{\maK_1^{m} }$. Indeed,
 \begin{equation}\label{eq.zero}
  \|u\|_{\maK^{m+1}_1} \, \le_c \, \, \|u\|_{\maK^{m}_1} +
  \sum_{k=1}^N \|X_k u\|_{\maK^{m}_1(U_k)} + \sum_{k=1}^{N} \|Y_k
  u\|_{\maK^{m}_1(U_k)} \,,
 \end{equation}
 and the first term on the right hand side is estimated by induction
 on $m$ by
 \begin{equation}\label{eq.unu}
  \|u\|_{\maK^{m}_1} \ \, \le \, \frac{C_{m-1}
    R(\coeff)^{N_{m-1}}}{\rho(\coeff)}\|F\|_{V_{m-1}^{-}} \, \le \,
  \frac{C_{m-1} R(\coeff)^{N_{m-1}}}{\rho(\coeff)}\|F\|_{\Vmm} \,.
 \end{equation}
 (Note that the other terms in Equation \eqref{eq.zero} are computed
 on smaller subsets $U_k$.)

 Let us estimate now the other terms in the sum appearing on the right
 hand side of the inequation \eqref{eq.zero}.  First, since $X_k$ is
 tangent to all edges of $\dom$, it integrates to a one parameter
 family of diffeomorphisms of $\dom$, and hence to strongly continuous
 one-parameter groups of continuous operators on $X := V_{m-1}$ and $Y
 := V_{m-1}^{-}$, due to the particular form of boundary conditions
 used to define these spaces. Let us denote by $S_X(t) : X \to X$ and
 $S_Y(t) : Y \to Y$, $t \in \RR$, the operators defining these
 one-parameter groups of operators.  We have that
 \begin{equation*}
  B \ede X_k P^{\coeff} - P^{\coeff} X_k \seq \lim_{t \to 0}
  t^{-1}(S_X(t) P^{\coeff}S_Y(-t) - P^{\coeff}) \seq P^{\coeff'} \,,
 \end{equation*}
 and hence $\coeff' \in Z_{m-1}$ is obtained by taking derivatives of
 $\coeff$.  Therefore $B : X \to Y$ is bounded by Lemma
 \ref{lemma.cont3}.  The assumptions of Lemma \ref{lemma.Nirenberg}
 are therefore satisfied.  Moreover, $\|B\| \le_c\|\coeff'\|_{Z_{m-1}}
 \le_c\|\coeff\|_{Z_m}$, which allows us to conclude that
 \begin{equation}\label{eq.one}
  \| X_k u \|_{\maK_1^{m} } \ \le_c \ \|(P^{\coeff})^{-1}\|_{m-1} \big
  ( \|X_k F\|_{V_{m-1}^{-} } + \|\coeff\|_{Z_m}
  \|(P^{\coeff})^{-1}\|_{m-1} \|F\|_{V_{m-1}^{-} } \big ) \,.
 \end{equation}
 Using also the relation $\|\coeff\|_{Z_m} \|P^{\coeff}(m-1, 0)^{-1}\|
 \ge 1/C$ of Equation \eqref{eq.beta.P}, we obtain
 \begin{eqnarray}
  \| X_k u \|_{\maK_1^{m} } &\le_c& \|(P^{\coeff})^{-1}\|_{m-1} \big
  (1 + \|(P^{\coeff})^{-1}\|_{m-1} \|\coeff\|_{Z_m} \big )
  \|F\|_{\Vmm}\nonumber\\ & \le_c & \frac{R(\coeff)^{2N_{m-1} +
      1}}{\rho(\coeff)} \label{eq.doi} \|F\|_{\Vmm}.
 \end{eqnarray}
 
 We now turn to the study of the terms $\| Y_k u \|_{\maK_1^{m}}$, for
 which we need to use the strong ellipticity of $P^{\coeff}$ (as in
 the classical methods \cite{LionsMagenes1, TrudingerBook}) together
 with Lemmas \ref{lemma.Cuse} and \ref{lemma.prod}. First of all,
 Lemma \ref{lemma.Cuse} provides us with the decomposition $c_kY_k^2 u
 = r_{\dom}^2 P^{\coeff} u - Q_ku$, where $c_k \in \maW^{m,
   \infty}(U_k)$ and $Q_k$ is a sum of differential operators of the
 form $Y_kX_k$ and $X_k^2$ and lower order differential operators
 generated by $X_k$ and $Y_k$ with coefficients in $\maW^{m,
   \infty}(U_k)$.  This gives using first the general form of the
 $\|\, \cdot \, \|_{\maK_1^{m} (U_k) }$-norm
 \begin{multline}\label{eq.two}
  \| Y_k u \|_{\maK_1^{m} (U_k) } \ \le_c \ \|Y_k u \|_{\maK_1^{m-1}}
  \, + \, \|X_k Y_k u \|_{\maK_1^{m-1} (U_k)} + \|Y_k^2 u
  \|_{\maK_1^{m-1}(U_k)} \\
\ \le_c \ \|u \|_{\maK_1^{m}} \, + \, \| Y_k X_k u
\|_{\maK_1^{m-1}(U_k)} + \| Y_k^2 u \|_{\maK_1^{m-1}(U_k)} \\
  \ \le_c \ \|u \|_{\maK_1^{m}} \, + \| X_k u \|_{\maK_1^{m}} \, + \,
  \| c_{k}^{-1} ( r_{\dom}^2p_{\coeff} - Q_{k})u
  \|_{\maK_1^{m-1}(U_k)}\,.
\end{multline}
The first term in the last line of Equation \eqref{eq.two} is
estimated by the induction hypothesis in Equation \eqref{eq.unu}. The
second one is estimated in Equation \eqref{eq.doi}. To estimate the
third term, we obtain directly from Lemma \ref{lemma.Cuse} the
following
\begin{enumerate}
\item each $c_{k} \in \maW^{m, \infty}(U_k)$ is bounded in terms of
  $\|\coeff\|_{Z_m}$,
 
\item the coefficients of $X_k^2$, $X_k Y_k$, $X_k$, and $Y_k$ and the
  free term of $Q_{k}$ (which is no longer in divergence form) are in
  $\maW^{m-1, \infty}(U_k)$ and are also bounded in terms of
  $\|\coeff\|_{Z_m }$,
 
\item $\|c_{k}^{-1}\|_{L^{\infty}} \le_c C_{use}^{-1} \le_c
  \rho(\coeff)^{-1}$.
\end{enumerate}
 Hence 
\begin{multline}\label{eq.ck}
 \|c_{k}^{-1}\|_{\maW^{m-1, \infty}(U_k)} \, \le_c \,
 \|c_{k}^{-1}\|_{L^{\infty}(U_k)}^{m} \|c_{k}\|_{\maW^{m-1,
     \infty}(U_k)}^{m-1} \\
  \, \le_c \, \rho(\coeff)^{-m} \|\coeff \|_{\maW^{m, \infty}}^{m-1}
  \seq \rho(\coeff)^{-1} R(\coeff)^{m-1} \, ,
\end{multline}
where the first inequality is by Lemma \ref{lemma.prod}(iii).

We have, successively
\begin{equation}\label{eq.Pbeta}
  \| r_{\dom}^2 p_{\coeff}u \|_{\maK_1^{m-1}(U_k)} \, \le_c \,
  \|p_{\coeff}u\|_{\maK_{-1}^{m-1}(U_k)} \, \le_c \,
  \|p_{\coeff}u\|_{\maK_{-1}^{m-1}} \, \le_c \, \|F
  \|_{V_{m-1}^{-}}\,.
\end{equation}
Similarly, let $\nu$ be the $\maW^{m-1, \infty}(U_k)$ norm of the
coefficients of $Q_k$, then $\nu \le_c \|\coeff\|_{\maW^{m, \infty}}$
and hence
\begin{multline}\label{eq.Qk}
 \| Q_{k} u \|_{\maK_1^{m-1}(U_k)} \, \le_c \, \nu \, \Big ( \|X_k^2
 u\|_{\maK^{m-1}_1(U_k)} + \|Y_k X_k u\|_{\maK^{m-1}_1(U_k)} \\
  + \|X_k u\|_{\maK^{m-1}_1(U_k)} + \|Y_k u\|_{\maK^{m-1}_1(U_k)} +
  \|u\|_{\maK^{m-1}_1(U_k)} \Big )\\
 \le_c \, \|\coeff\|_{Z_m} \Big ( \|X_k u\|_{\maK^{m}_1} +
 \|u\|_{\maK^{m}_1} \Big )\\
 \le_c \, ( R(\coeff)^{N_{m-1} +1} + R(\coeff)^{2N_{m-1} + 2} )\|F
 \|_{V_{m-1}^{-}}\, \le_c \, R(\coeff)^{2N_{m-1} + 2} \|F
 \|_{V_{m-1}^{-}}\,,
\end{multline}
where we have used also Equations \eqref{eq.unu} and \eqref{eq.doi}.
Consequently,
\begin{multline}\label{eq.Y2.c}
 \| c_{k}^{-1} ( r_{\dom}^2p_{\coeff} - Q_{k})u \|_{\maK_1^{m-1}(U_k)}
 \, \le_c \, \| c_{k}^{-1}\|_{\maW^{m-1, \infty}} \|
 r_{\dom}^2p_{\coeff}u - Q_{k}u \|_{\maK_1^{m-1}(U_k)}\\
 \le_c \, \frac{R(\coeff)^{m-1}}{\rho(\coeff)} \big (1 +
 R(\coeff)^{2N_{m-1} + 2}) \|F \|_{V_{m-1}^{-}} \le_c \,
 \frac{R(\coeff)^{2N_{m-1} + m +1}}{\rho(\coeff)} \|F
 \|_{V_{m-1}^{-}}.
\end{multline}
Substituting back into Equation \eqref{eq.two} the estimates of
Equations \eqref{eq.unu}, \eqref{eq.doi}, and \eqref{eq.Y2.c}, we
obtain
\begin{multline}\label{eq.trei}
  \| Y_k u \|_{\maK_1^{m} (U_k) } \ \le_c \ \frac{ R(\coeff)^{N_{m-1}}
    + R(\coeff)^{2N_{m-1} + 1} + R(\coeff)^{2N_{m-1} + m +1}
  }{\rho(\coeff)}\|F\|_{\Vmm}\\
  \le_c \ \frac{ R(\coeff)^{2N_{m-1} + m +1}
  }{\rho(\coeff)}\|F\|_{\Vmm}.
\end{multline}

In a completely analogous manner, substituting back into Equation
\eqref{eq.zero} the estimates of Equations \eqref{eq.unu},
\eqref{eq.doi}, and \eqref{eq.trei}, we obtain

\begin{multline}\label{eq.final}
  \|u\|_{\maK^{m+1}_1} \, \le_c \, \, \|u\|_{\maK^{m}_1} +
  \sum_{k=1}^N \|X_k u\|_{\maK^{m}_1(U_k)} + \sum_{k=1}^{N} \|Y_k
  u\|_{\maK^{m}_1(U_k)} \\
  \le_c \ \frac{ R(\coeff)^{N_{m-1}} + R(\coeff)^{2N_{m-1} + 1} +
    R(\coeff)^{2N_{m-1} + m +1} }{\rho(\coeff)}\|F\|_{\Vmm}\\
  \le_c \ \frac{ R(\coeff)^{2N_{m-1} + m +1}
  }{\rho(\coeff)}\|F\|_{\Vmm}\,.
\end{multline}
In all the statements above, saying $\|v\|_Z < \infty$ for some Banach
space $Z$ means, implicitly, that $v \in Z$.  We thus have that $u \in
\maK^{m+1}_1$ and that it satisfies the required estimate with $N_m =
2N_{m-1} + m + 1$.  The proof is complete.
\end{proof}

\begin{remark}
 Remark \ref{rem.a.neq.0} gives that there exist parameters $\gamma_1$
 and $\gamma_2$, independent of $\coeff$, such that $\rho(\coeff(a))
 \ge \rho(\coeff) - \gamma_1 |a| - \gamma_2 a^{2}$.  Moreover, an
 induction argument gives that $N_m =2^{m+2}-m-3 \ge 0$ in two
 dimensions. We ignore if this is true in higher dimensions as well.
\end{remark}

As mentioned in the introduction, an example to Theorem
\ref{thm.main2} is the Schroedinger operator $H := - \Delta + c
r_{\Omega}^{-2}$ on $\dom$ with {\em pure Neumann} boundary
conditions. See also Theorem \ref{thm.DN}.

\section{Extensions and applications\label{sec5}}

\subsection{Some direct consequences of Theorem \ref{thm.main2}}

We conclude with a few  corollaries. For simplicity, we formulate them
only in the case $a =  0$, since Remark \ref{rem.a.neq.0} allows us to
reduce to the  case $a = 0$.  Throughout the rest of  this section, we
continue to assume that $\coeff = (a_{ij}, b_{i}, c) \in Z_m$ and that
$\dom$ is  a bounded, curvilinear  polygonal domain with  $\pa_D \dom$
nonempty.

Recall that $\maL(V; V^{*})_{c} \subset Z$ denotes the set of coefficients 
that yield a coercive operator.

\begin{corollary}\label{cor.two} 
Let $U := \maL(V; V^{*})_{c} \cap Z_m$. Then $U$ is an open subset of
$Z_m$ and the map $U \times \Vmm \ni (\coeff, F) \to
(P^{\coeff})^{-1}F \in \Vm$ is analytic and
\begin{equation*}
 \|(P^{\coeff})^{-1} F\|_{\Vm} \ \le \ C_m \,
 \frac{\|\coeff\|_{Z_m}^{N_m}}{\rho(\coeff)^{N_m + 1}} \,
 \|F\|_{\Vmm} \,.
\end{equation*}
\end{corollary}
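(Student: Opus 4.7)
The plan is to deduce Corollary \ref{cor.two} directly from Theorem \ref{thm.main2} in the specialization $a=0$, combined with the general openness and continuity results of Section \ref{sec:Notation}. Because Theorem \ref{thm.main2} already does nearly all of the work, the argument is short; the only ingredient outside Theorem \ref{thm.main2} is transferring the openness of the coercive operators from $\maL(V;V^{*})$ down to the (stronger-topology) space $Z_m$.

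\emph{Step 1 (openness of $U$).} I would first observe that the natural inclusion $Z_m \hookrightarrow Z$ is norm-continuous. Indeed, for $\coeff \in Z_m$, the weighted bounds $a_{ij}, r_{\dom} b_{i}, r_{\dom}^{2} c \in L^{\infty}(\dom)$ (controlled by $\|\coeff\|_{Z_m}$) together with the Hardy inequality on $V$ give $|B^{\coeff}(u,v)| \le_c \|\coeff\|_{Z_m}\|u\|_V \|v\|_V$, so $\|\coeff\|_Z \le_c \|\coeff\|_{Z_m}$. Composing with the continuous map $Z \ni \coeff \mapsto P^{\coeff} \in \maL(V;V^{*})$ of Corollary \ref{cor.cont}, and taking the preimage of the open set $\maL(V;V^{*})_{c}$ (open by Remark \ref{rem.LaxMilgram}), shows that $U$ is open in $Z_m$.

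\emph{Step 2 (norm estimate).} I would then specialize Theorem \ref{thm.main2} to $a=0$. In this case $\coeff(a)=\coeff$ in the notation of Remark \ref{rem.a.neq.0} (the correcting terms $a\gamma_1+a^2\gamma_2$ vanish), so the estimate of Theorem \ref{thm.main2} reads
\begin{equation*}
 \|P^{\coeff}(m,0)^{-1}\|_{\maL(\Vmm;\Vm)} \ \le \ C_m\, \rho(\coeff)^{-N_m-1}\|\coeff\|_{Z_m}^{N_m}.
\end{equation*}
Applying this bound to an arbitrary $F \in \Vmm$ and using the definition of the operator norm gives the displayed estimate on $\|(P^{\coeff})^{-1}F\|_{\Vm}$.

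\emph{Step 3 (analyticity).} Theorem \ref{thm.main2} already asserts that $\coeff \mapsto P^{\coeff}(m,0)^{-1}$ is analytic from $U$ to $\maL(\Vmm;\Vm)$. The evaluation map $\maL(\Vmm;\Vm)\times \Vmm \ni (T,F) \mapsto TF \in \Vm$ is bilinear and continuous, hence analytic by Lemma \ref{lemma.analytic}(i). The map $(\coeff,F) \mapsto (P^{\coeff})^{-1}F$ is the composition of these two analytic maps (preceded by $(\coeff,F)\mapsto(P^{\coeff}(m,0)^{-1},F)$, which is analytic because its first component is analytic and its second is the identity), and composition of analytic maps between Banach spaces is analytic.

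\emph{Main obstacle.} There is no real obstacle: everything follows from Theorem \ref{thm.main2}. The only step not immediate from earlier results is Step~1, where one must verify that the stronger topology of $Z_m$ still maps continuously into $Z=\maL(V;V^{*})$; this rests on Hardy's inequality, which converts the weighted $L^{\infty}$ control of $b_i$ and $c$ into genuine $V\times V$-boundedness of the sesquilinear form.
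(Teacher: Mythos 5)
Your proposal is correct and follows essentially the same route as the paper: openness of $U$ by pulling back the open set of coercive operators through a continuous map into $\maL(V;V^{*})$, the norm estimate by specializing Theorem~\ref{thm.main2} to $a=0$, and analyticity by composition with the bilinear evaluation map. The one small difference is in Step~3: you invoke the analyticity assertion already contained in the statement of Theorem~\ref{thm.main2}, whereas the paper re-derives it by factoring $(\coeff,F)\mapsto(P^{\coeff})^{-1}F$ through the three maps $(\coeff,F)\mapsto(P^{\coeff},F)\mapsto(P^{-1},F)\mapsto P^{-1}F$ and applying Lemma~\ref{lemma.analytic}, exactly as in Lemma~\ref{lemma.LaxMilgram}. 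Since the proof of Theorem~\ref{thm.main2} is really devoted to invertibility and the norm bound, the composition argument in the Corollary's proof is in fact where the analyticity claim gets its justification, so the paper's slightly longer route is not redundant; but your version is logically equivalent. Your Step~1 is actually a bit more explicit than the paper's, which merely states that the inclusion is continuous; your spelling out of the Hardy-inequality mechanism (or, when $V=V_0(0)$, the definitional control of $\|v/r\|_{L^2}$ by $\|v\|_V$) that converts the $Z_m$-control of $r_{\dom}b_i$ and $r_{\dom}^2 c$ into boundedness of $B^{\coeff}$ on $V\times V$ is a useful clarification.
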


\begin{proof}
 The inclusion $\maW^{m, \infty}(\dom) \to Z_m$ is continuous and
 $\maL(V; V^{*})_c \cap Z_{m}$ is open in $Z_m $.  Hence $U$ is open
 in $Z_m$. Next we proceed as in Lemma \ref{lemma.LaxMilgram} using
 that the map $\Phi : U \times \Vmm \to \Vm$, $\Phi (\coeff, F) :=
 (P^{\coeff})^{-1}F$ is the composition of the maps
 \begin{equation*}
  \begin{gathered}
   U \times \Vmm \, \ni\, (\coeff, F) \,\to\, (P^{\coeff}, F) \,\in\,
   \maL_{inv}(\Vm; \Vmm) \times \Vmm\,,\\
 \maL_{inv}(\Vm; \Vmm) \times \Vmm \,\ni\, (P, F) \,\to\, (P^{-1}, F)
 \,\in\, \maL(\Vmm; \Vm) \times \Vmm\,, \ \mbox{ and }\\
 \maL(\Vmm; \Vm) \times \Vmm \,\ni\, (P^{-1}, F) \,\to\, P^{-1}F
 \,\in\, \Vm \,.
  \end{gathered}
 \end{equation*}
 The first of these three maps is well defined and linear by Theorem
 \ref{thm.main2}.  The other two maps are analytic by Lemma
 \ref{lemma.analytic}. Since the composition of analytic functions is
 analytic, the result follows.
\end{proof}

The following result is useful in approximating solutions of
parametric problems.

\begin{corollary}\label{cor.three} Let $Y$ be a Banach
space and let $U \subset Y$ be an open subset. Let $F : U \to \Vmm$
and $\coeff : U \to \maL(V; V^{*})_{c} \cap \maW^{m, \infty}(\dom)$ be
analytic functions. Then $U \ni y \to (P^{\coeff(y)})^{-1}F(y) \in
\Vm$ is analytic and we have
\begin{equation*}
 \|(P^{\coeff(y)})^{-1} F(y)\|_{\Vm} \ \le \ C_m \,
 \frac{\|\coeff(y)\|_{Z_m}^{N_m}}{\rho(\coeff(y))^{N_m + 1}} \,
 \|F(y)\|_{\Vmm} \,.
\end{equation*}
In particular, if the functions $\|\coeff(y)\| =
\|\coeff(y)\|_{\maW^{m,\infty}(\dom)}$ and $\|F(y)\|_{\Vmm}$ are
bounded {\em and} there exists $c > 0$ such that $\rho(\coeff(y)) >
c$, then $(P^{\coeff(y)})^{-1} F(y)$ is a bounded analytic function.
\end{corollary}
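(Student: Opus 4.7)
My plan is to derive Corollary \ref{cor.three} directly from Corollary \ref{cor.two} as a statement about the composition of analytic maps between Banach spaces, together with the explicit norm estimate provided there. The first step is to verify that the hypothesis $\coeff : U \to \maL(V; V^{*})_{c} \cap \maW^{m,\infty}(\dom)$ upgrades to an analytic map with values in the open set $\maL(V; V^{*})_{c} \cap Z_m \subset Z_m$ where Corollary \ref{cor.two} lives. For this I need the natural inclusion $\maW^{m,\infty}(\dom) \hookrightarrow Z_m$ (applied componentwise to the coefficient tuple) to be continuous and linear. This follows from Lemma \ref{lemma.prod}(ii), which shows that $\maW^{m,\infty}(\dom)$ is an algebra, together with the fact that $r_{\dom}$ and $r_{\dom}^{2}$ lie in $\maW^{m,\infty}(\dom)$: indeed, each of the three quantities $\|a_{ij}\|_{\maW^{m,\infty}}$, $\|r_{\dom} b_{i}\|_{\maW^{m,\infty}}$, and $\|r_{\dom}^{2} c\|_{\maW^{m,\infty}}$ appearing in the definition of $\|\coeff\|_{Z_m}$ is dominated by a constant multiple of $\|\coeff\|_{\maW^{m,\infty}(\dom)}$.

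Once this is established, a continuous linear map is analytic, so post-composition yields an analytic map $y \mapsto \coeff(y) \in Z_m$ that lands in $\maL(V; V^{*})_{c} \cap Z_m$. Pairing with the analytic map $y \mapsto F(y) \in \Vmm$ gives an analytic map from $U$ into the domain of definition of the solution map of Corollary \ref{cor.two}. Then I would invoke Corollary \ref{cor.two}, according to which
\begin{equation*}
(\coeff, F) \longmapsto (P^{\coeff})^{-1} F
\end{equation*}
is analytic as a map $(\maL(V; V^{*})_{c} \cap Z_m) \times \Vmm \to \Vm$. Because a composition of analytic Banach-space-valued maps is analytic, the composed map $y \mapsto (P^{\coeff(y)})^{-1} F(y) \in \Vm$ is analytic.

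The pointwise norm estimate is just the estimate from Corollary \ref{cor.two} evaluated at $(\coeff(y), F(y))$ for each $y \in U$; there is nothing to prove beyond substitution. For the final statement, assume $\|\coeff(y)\|_{\maW^{m,\infty}(\dom)} \le M$, $\|F(y)\|_{\Vmm} \le K$ and $\rho(\coeff(y)) \ge c > 0$ uniformly in $y \in U$. By the continuity of the inclusion just discussed, $\|\coeff(y)\|_{Z_m} \le C' M$ uniformly, and plugging these three bounds into the explicit estimate produces a uniform bound
\begin{equation*}
  \|(P^{\coeff(y)})^{-1} F(y)\|_{\Vm} \ \le \ C_m \frac{(C' M)^{N_m}}{c^{N_m + 1}} K,
\end{equation*}
which yields the claimed bounded analytic function. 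I do not anticipate any substantive obstacle here; the whole corollary is a packaging of Corollary \ref{cor.two} for parametric problems, and the only nontrivial input is the continuity of the inclusion $\maW^{m,\infty}(\dom) \hookrightarrow Z_m$, which is an immediate consequence of the algebra property recorded in Lemma \ref{lemma.prod}(ii).
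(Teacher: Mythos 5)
Your proposal is correct and takes essentially the same route as the paper's own proof, which is a terse three-line reduction to Corollary \ref{cor.two} via the fact that compositions of analytic Banach-space-valued maps are analytic. You helpfully spell out the one point the paper leaves implicit: that the hypothesis on $\coeff$ lands in $\maW^{m,\infty}(\dom)$ componentwise and so must be pushed into $Z_m$ via the continuous (indeed linear) inclusion coming from the algebra property of Lemma \ref{lemma.prod}(ii) and the fact that $r_{\dom}$ and $r_{\dom}^2$ lie in $\maW^{m,\infty}(\dom)$; this is exactly what the paper is using without comment.
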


\begin{proof}
 The composition of two analytic functions is analytic. The first part
 is therefore an immediate consequence of the first part of Corollary
 \ref{cor.two}. The second part follows also from Corollary
 \ref{cor.two}.
\end{proof}


The method used to obtain analytic dependence of the solution in terms
of coefficients can be extended to other settings.

\begin{remark}
Let us assume the following:
 \begin{enumerate}[(i)]
 \item We are given continuously embedded Banach spaces
  $\Wp{m+1} \subset V \subset H^{1}(\dom)$, $\Wm{m-1} \subset V^{*}$, and $\maZ \subset Z_m$
   satisfying the following properties:
\item For any $\coeff \in \maZ$, the operator $P^{\coeff}$ defines continuous maps
 $V \to V^{*}$ and $\Wp{m+1} \to \Wm{m-1}$.
\item 
  $\|P^{\coeff}\|_{\maL(\Wp{m+1}; \Wm{m-1})} \le_c \|\coeff\|_{\maZ}$
  and $\|P^{\coeff}\|_{\maL(V; V^{*})} \le_c \|\coeff\|_{\maZ}$.
\item If $\coeff \in \maZ$ and $P^{\coeff} : V \to V^{*}$ is coercive,
  then the map $(P^{\coeff})^{-1} : V^{*} \to V $ maps $\Wm{m-1}$ to
  $\Wp{m+1}$ continuously and there exists a continuous, increasing function
  $\mathfrak{N}_m : \RR_{+}^2 \to \RR_{+}$ such that
\begin{equation*}
  \| ( P^{\coeff})^{-1}\|_{\maL(\Wm{m-1}; \Wp{m+1})} \, \le \,
  \mathfrak{N}_m\big ( \rho(\coeff)^{-1} \, , \, \|\coeff\|_Z \big )
  \,.
\end{equation*}
\end{enumerate}
Then our previous results (in particular, Corollaries \ref{cor.two}
and \ref{cor.three}) extend to the new setting by replacing
$\maW^{m,\infty}(\dom)$ with $Z$, $\Vm$ with $\Wp{m+1}$, $\Vmm$
with $\Wm{m-1}$, and by using $\mathfrak{N}_m$ in the bounds
for the norm. We thank Markus Hansen and Christoph
Schwab for their input related to this remark.
\end{remark}

\subsection{General domains with conical points}

The same argument as in the proof of Theorem \ref{thm.main2} gives a
proof of a similar result on general domains with conical points. In
the neighborhood of a conical point, the domain is of the form $\dom =
\{ r x' \vert \ 0 < r < 1, x' \in \omega$, where $\omega \subset
S^{n-1}$ is a smooth domain on the unit sphere $S^{n-1}$. The main
difference is that we will need to additionally straighten the
boundary of $\omega$.

\subsection{Dirichlet and Neumann boundary conditions} 
We conclude this paper by an application of Theorem \ref{thm.main2} to
estimates for Schroedinger operators.  We note that the following
result applies to arbitrary mixed boundary conditions (including pure
Neumann).

\begin{theorem}\label{thm.DN} Let 
$P^{\coeff} u = -\sum_{ij=1}^{d} \pa_i a_{ij} \pa_j u +
  \frac{c}{r^{2}_{\dom}} u$, $c \ge 0$, be a strongly elliptic
  operator (so $b_i = 0$). 
  In case $p \in \maV \subset \pa \dom$ is a vertex that belongs to two
  adjacent Neumann edges, we assume that $c(p) > 0$.
  Then $P^{\coeff}$ is coercive. Moreover, 
\begin{equation}
 P^{\coeff} : \Vm \ede \maK_{a+1}^{m+1}(\dom) \cap \{u\vert_{\pa_D \dom}
 = 0 \} \ \to \ \Vmm \ede \maK_{a-1}^{m-1}(\dom) \oplus
 \maK_{a-1/2}^{m-1/2}(\dom)
\end{equation}
is an isomorphism and its inverse has norm
\begin{equation*}
  \|(P^{\coeff})^{-1}\| \ \le \ C \,\rho(\beta)^{-N_m - 1} 
  \big (\sum_{ij} \|a_{ij}\|_{\maW^{m, \infty}(\dom)} + 
  \|c\|_{\maW^{m, \infty}(\dom)} \big)^{N_m} \,,\quad |a| \le 1 \,,
\end{equation*}
with $N_m$ as in Theorem \ref{thm.main2} and $C$ independent of 
$\coeff$.
\end{theorem}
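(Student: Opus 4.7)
The plan is to deduce Theorem \ref{thm.DN} from Theorem \ref{thm.main2} by verifying coercivity of $P^{\coeff}$ on a suitable Hilbert space $V$; everything else then follows essentially mechanically from that theorem together with Remark \ref{rem.a.neq.0}.

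\emph{Choice of $V$ and the reduction.} I would take $V := \{v \in H^1_D(\dom) \mid \int_{\dom} c\, r_\dom^{-2}|v|^2\, dx < \infty\}$ equipped with the graph-type norm $\|v\|_V^2 := \|v\|_{H^1(\dom)}^2 + \int_\dom c\, r_\dom^{-2}|v|^2\, dx$. This is a Hilbert space containing $H^1_0(\dom)$ as a closed subspace (not in general closed in $H^1(\dom)$, as flagged in Section \ref{sec:Notation}), and the form $B^{\coeff}$ extends continuously to $V \times V$ by Cauchy--Schwarz in the weighted measure $c\, r_\dom^{-2}\, dx$. Strong ellipticity and $c \geq 0$ immediately give the baseline bound
\[
\Re B^{\coeff}(v,v) \;\geq\; C_{\mathrm{use}} \|\nabla v\|_{L^2(\dom)}^2 + \int_{\dom} c\, r_\dom^{-2}|v|^2\, dx,
\]
and the task is to promote this to a lower bound by a positive constant times $\|v\|_V^2$.

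\emph{Coercivity by Rellich and Fatou.} I would argue by contradiction: if not coercive, there exist $v_n \in V$ with $\|v_n\|_V = 1$ and $\Re B^{\coeff}(v_n, v_n) \to 0$, hence $\|\nabla v_n\|_{L^2} \to 0$ and $\int c\, r_\dom^{-2}|v_n|^2 \to 0$. Since $v_n$ is bounded in $H^1(\dom)$, Rellich compactness yields a subsequence $v_n \to v$ in $L^2(\dom)$ and a.e., with $\nabla v = 0$, so $v$ is constant on the connected $\dom$. If $\pa_D \dom$ has positive surface measure, the trace inequality for $H^1_D$ forces $v = 0$, contradicting $\|v_n\|_V = 1$. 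In the pure Neumann case every vertex $p \in \maV$ lies between two Neumann edges, so $c(p) > 0$ by hypothesis; on a neighborhood $U_p$ of $p$ one has $c \geq c(p)/2$, and Fatou gives $\int_\dom c\, r_\dom^{-2}|v|^2 \leq \liminf_n \int_\dom c\, r_\dom^{-2}|v_n|^2 = 0$. A nonzero constant $v$ is incompatible with this, because in two dimensions $\int_{U_p} r_\dom^{-2}\, dx = +\infty$ (polar coordinates reduce this to $\int_0^\varepsilon r^{-1}\, dr$). Hence $v = 0$, and again $\|v_n\|_V \to 0$, a contradiction.

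\emph{Conclusion via Theorem \ref{thm.main2}.} Coercivity being established, Theorem \ref{thm.main2} applies to the coefficient tuple $(a_{ij}, 0, c\, r_\dom^{-2})$. The key numerical identity is $\|r_\dom^{2}\cdot c\, r_\dom^{-2}\|_{\maW^{m,\infty}(\dom)} = \|c\|_{\maW^{m,\infty}(\dom)}$, which gives $\|\coeff\|_{Z_m} \leq C\bigl(\sum_{ij}\|a_{ij}\|_{\maW^{m,\infty}(\dom)} + \|c\|_{\maW^{m,\infty}(\dom)}\bigr)$, matching the form of the stated bound. The restriction $|a| \leq 1$ is absorbed by Remark \ref{rem.a.neq.0}, which provides $\rho(\coeff(a)) \geq \rho(\coeff) - \gamma_1|a| - \gamma_2 a^2$ and hence controls $\rho(\coeff(a))^{-1}$ by a constant multiple of $\rho(\coeff)^{-1}$ uniformly on $|a| \leq 1$, at the cost of enlarging $C$.

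\emph{Main obstacle.} The delicate step is coercivity in the pure Neumann case. The hypothesis $c(p) > 0$ at vertices between adjacent Neumann edges is sharp, since without it the constants lie in $V$ unpenalized by the inverse-square term and coercivity genuinely fails. The mechanism that makes the hypothesis suffice is specifically two-dimensional: the non-integrability of $r_\dom^{-2}$ at a vertex converts an arbitrarily small amount of potential near $p$ into an infinite obstruction, ruling out any nonzero constant mode in the Rellich limit. In higher dimensions one would need a different integrability exponent or a different geometric argument altogether.
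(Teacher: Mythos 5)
The paper itself does not print a proof of Theorem \ref{thm.DN}; it is stated as an application of Theorem \ref{thm.main2}, and your reduction is the natural and, almost certainly, intended one. Your choice of $V$ with the weighted graph norm is exactly what Section \ref{sec:Notation} anticipates when it notes that for this application $V$ fails to be closed in $H^{1}(\dom)$; the Rellich-plus-Fatou compactness argument, with the two-dimensional non-integrability of $r_{\dom}^{-2}$ ruling out a nonzero constant limit in the pure Neumann case, is the correct mechanism for coercivity; and the identity $\|r_{\dom}^{2}\cdot(c\,r_{\dom}^{-2})\|_{\maW^{m,\infty}(\dom)}=\|c\|_{\maW^{m,\infty}(\dom)}$ correctly converts the $Z_m$-norm of $\coeff$ into the stated form of the bound.

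Two small points should be tightened. The step ``$c\geq c(p)/2$ on a neighborhood of $p$'' implicitly uses continuity of $c$ at the vertex, which $\|c\|_{\maW^{m,\infty}(\dom)}<\infty$ alone does not provide (compare $\sin(\log r_{\dom})$, which lies in every $\maW^{m,\infty}$); one should read the hypothesis $c(p)>0$ as asserting a positive lower bound for $c$ near $p$, which is precisely what your Fatou step actually uses. Also, the claim that Remark \ref{rem.a.neq.0} bounds $\rho(\coeff(a))^{-1}$ by a fixed multiple of $\rho(\coeff)^{-1}$ uniformly over $|a|\le 1$ is too strong as written: the inequality $\rho(\coeff(a))\geq\rho(\coeff)-\gamma_1|a|-\gamma_2 a^2$ is vacuous when $\rho(\coeff)\le\gamma_1+\gamma_2$, so the displayed estimate in terms of $\rho(\coeff)$ is not unconditional on $|a|\le 1$. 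What Theorem \ref{thm.main2} delivers directly is the bound with $\rho(\coeff(a))$; to replace it by $\rho(\coeff)$ one must either keep that dependence or add the (mild) hypothesis $\rho(\coeff)>\gamma_1+\gamma_2$, which the theorem's statement arguably leaves implicit.
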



\end{document}